\documentclass[review]{siamart220329}

\pdfoutput=1

\usepackage{color}
\usepackage{graphicx,epic} 
\usepackage{amsmath}
\usepackage{amsfonts}
\usepackage{amssymb}
\usepackage{epstopdf}
\usepackage{algorithm}
\usepackage{algpseudocode}

\usepackage{setspace}
\usepackage{xcolor}
\usepackage{lineno} 
\usepackage{url}
\usepackage{hyperref}  

\usepackage[normalem]{ulem}



\newtheorem{remark}{\bf Remark}
\newtheorem{example}{\bf Example}

\newcommand{\mbf}[1]{\mbox{\boldmath$#1$}}

\definecolor{dukeblue}{rgb}{0.0, 0.0, 0.61}
\definecolor{ivogreen}{HTML}{009538}

\begin{document}
	
\nolinenumbers	

\title{Convex optimization problems inspired by geotechnical stability analysis\thanks{Submitted to the editors on January 4, 2025.
\funding{The research was supported by the Ministry of Education, Youth and Sports of the Czech Republic within the OP JAK project INODIN, no. \url{CZ.02.01.01/00/23_020/0008487}, co-funded by EU.}}}

\author{Stanislav Sysala\thanks{Institute of Geonics of the Czech Academy of Sciences, Ostrava, Czech Republic (\email{stanislav.sysala@ugn.cas.cz},
\email{michal.beres@ugn.cas.cz},
\email{simona.beresova@ugn.cas.cz}, \email{tomas.luber@ugn.cas.cz}).}
\and  Michal B\'{e}re\v{s}$^\dagger$\thanks{Faculty of Electrical Engineering and Computer Science, V\v SB - Technical University of Ostrava, Ostrava, Czech Republic.} 
\and Simona B\'{e}re\v{s}ov\'a\footnotemark[2] \footnotemark[3]
\and Jaroslav Haslinger\thanks{Faculty of Mechanical Engineering, V\v SB - Technical University of Ostrava, Ostrava, Czech Republic (\email{hasling@karlin.mff.cuni.cz}).}
\and Jakub Kru\v{z}\'ik\footnotemark[2] \footnotemark[3]
\and Tom\'a\v{s} Luber\footnotemark[2]
}


\maketitle

\begin{abstract}
This paper is motivated by the \textit{limit load}, \textit{limit analysis} and \textit{shear strength reduction} methods, which are commonly employed in geotechnical stability analysis or similar applications. The aim is to make these methods more approachable by introducing a unified framework based on abstract convex optimization and its parametric studies. We establish suitable assumptions on the abstract problems that capture the selected features of these methods and facilitate rigorous theoretical investigation. Further, we propose continuation techniques tailored to the resulting parametric problem formulations and show that the developed abstract framework could also be useful outside the domain of geotechnical stability analysis. The main results are illustrated with analytical and numerical examples. The numerical example deals with a 3D slope stability problem. 

\end{abstract}


\begin{keywords}
convex optimization, functions with linear growth, limit analysis problem, continuation techniques, geotechnical stability analysis
\end{keywords}

\begin{MSCcodes}
	90C25, 74C05, 74S05, 90C31, 49M15 
\end{MSCcodes}



\section{Introduction}
\label{sec_intro}

Stability assessment of structures is of utmost importance in civil engineering and geotechnics. It includes the determination of the \textit{factor of safety} (FoS) and the estimation of failure zones for a critical state of loads or other input parameters. The \textit{limit load} (LL), \textit{limit analysis} (LA) and \textit{shear strength reduction} (SSR) methods are frequently used, especially if we build on elasto-plastic (EP) models, mainly perfectly plastic ones, and on the finite element method. 


The LL method is a universal approach where external forces are gradually increased, see, for example, \cite{NPO08}. The corresponding FoS is defined as a critical (limit) value of the load factor. Beyond this value, no solution of the EP problem exists. This incremental technique is closely related to the LA approach where FoS is defined more directly, by solving a convex optimization problem with constraints, see \cite{T85,Ch96,Sl13,KLK16,HRS15}. LA has originally appeared as an analytical approach estimating FoS from above and below, see \cite{C75}. The corresponding lower and upper bounds follow from the duality between stress and kinematic fields within the LA problem, see \cite{T85,Ch96,SHRR21}.

The SSR method \cite{ZHL1975,BB91,DRD99,GL99} is more conventional in slope stability assessment and similar geotechnical applications than the LL method. It was originally proposed for the Mohr-Coulomb EP model and then extended to some other plastic models. The corresponding FoS is defined as a critical value of a scalar factor reducing the strength material parameters of the model. The LA approach can also be used within the SSR method as an alternative definition of FoS, see \cite{TSSLR2015,TSS2015b,KL15,Sysala_2021,STHM23}. 


In this paper, we develop an abstract mathematical background for convex optimization problems in $\mathbb R^n$ inspired by the EP problem and the related LL, LA and SSR methods. Such a treatment is motivated by the following reasons.

First, finite-dimensional algebraic problems are common for researchers in mathematics, engineering or for software developers. Therefore, methods and results presented in this paper should be readable for a broader class of scientists. It is not necessary to know EP models for reading this paper.

Second, we highlight interesting mathematical background of the LL, LA and SSR methods, which is not too known and could be useful in other applications. Due to the abstract setting, we demonstrate that the LA approach can also lead to innovative solvability conditions in convex optimization. Some theoretical results are illustrated with analytical and numerical examples.

Third, we use the abstract framework to derive original relationships among LL, LA and SSR methods and to propose an advanced continuation technique for finding FoS, which is innovative within the SSR method. To make the text easier for the specialists in plasticity and geotechnics, the paper is completed with remarks interpreting the abstract notation.

The most important and difficult task of this paper is to establish convenient assumptions on the abstract analogies of the LL, LA and SSR methods and then establish main features of these methods by tools of mathematical analysis. Our inspiration stems from our earlier research, see \cite{SHHC15, CHKS15, HRS15, HRS16, RSH18, HRS19,SHRR21} for the LL and LA methods and \cite{Sysala_2021, STHM23} for the SSR methods. This topic is also closely related to variational problems with linear growth functionals having various applications, see, for example, \cite{R89,HZ94,CLR06}.

The rest of the paper is organized as follows. In Section \ref{sec_algebra}, we introduce an abstract algebraic counterpart of the EP problem, specify key assumptions and prove useful auxiliary results. Sections \ref{sec_LA} and \ref{subsec_LA} are devoted to the algebraic analogies of the LL and LA methods, respectively. We highlight the importance of the LA problem on solvability in convex optimization. In Section \ref{sec_SSRM}, we analyze an algebraic problem inspired by the SSR method and relate it with the LA approach. In Section \ref{sec_num_example}, the Mohr-Coulomb EP problem is briefly recapitulated and a numerical example on slope stability in 3D is presented and solved by the LL and SSR methods. Concluding remarks are given in Section \ref{sec_conclusion}.


\section{Abstract algebraic problem and key assumptions}
\label{sec_algebra}

Consider the following algebraic system of nonlinear equations in $\mathbb R^n$:
\begin{equation}
	\mbox{find } u\in\mathbb R^n: \quad F(u) = b, \qquad  F\colon\mathbb R^n\rightarrow\mathbb R^n, \; b\in\mathbb R^n,
	\label{alg_system}
\end{equation}
with the following key assumptions, which are considered throughout the paper. Other (more specific) assumptions will be introduced in the sequel.

\begin{itemize}
\item[$(\mathcal A_1)$] \textit{$b\neq0$, $F(0)=0$ and $F$ is continuous in $\mathbb R^n$.}
\item[$(\mathcal A_2)$] \textit{$F$ has a convex potential in $\mathbb R^n$}, i.e., there exists a convex and continuously differentiable function $\mathcal I\colon\mathbb R^n\rightarrow \mathbb R$ such that $\nabla\mathcal I(v)=F(v)$ for any $v\in\mathbb R^n$. Without loss of generality, we additionally assume that $\mathcal I(0)=0$.
\item[$(\mathcal A_3)$] \textit{$\mathcal I$ has at least linear growth at infinity}, i.e., 
\begin{equation}
\text{there exist}\;  c_1>0,\;c_2\geq0:\quad \mathcal I(v)\geq c_1\|v\|-c_2\quad\text{for  all }\, v\in\mathbb R^n.
\label{lin_growth}
\end{equation}
\end{itemize}

\begin{remark}
(EP interpretation.) \emph{In EP, the system \eqref{alg_system} arises from a discrete setting of the problem in terms of displacements, see Section \ref{sec_num_example}. $F$ is usually Lipschitz continuous and even semismooth in EP. The assumption $(\mathcal A_2)$ holds for associated EP models obeying the principle of maximum plastic dissipation, or for nonassociated models, which are approximated by a series of associated models (e.g. Davis' approximation). The assumption $(\mathcal A_3)$ is usual for elastic-perfectly plastic models or for EP models with bounded hardening. For EP models with unbounded hardening, a quadratic growth at infinity is expected leading to much stronger solvability results than the presented ones.}
\end{remark}

From the mathematical point of view, the assumption $(\mathcal A_2)$ enables us to analyze the system \eqref{alg_system} by the following minimization problem:
	\begin{equation}
\mbox{find } u\in\mathbb R^n:\quad\mathcal J(u)\leq \mathcal J(v)\;\;\text{for all }\, v\in\mathbb R^n,\quad\mbox{where}\quad\mathcal J(v)=\mathcal I(v)-b^\top v.
\label{J_min}
	\end{equation}
It is well-known that $u\in\mathbb R^n$ solves \eqref{alg_system} if and only if it is a minimum of $\mathcal J$ in $\mathbb R^n$. Therefore, the problems \eqref{alg_system} and \eqref{J_min} have the same solution set, which will be denoted by $\mathcal K$ from now on.

In what follows, we shall present basic properties of $\mathcal K$. First of all, the set $\mathcal K$ is closed and convex due to the convexity and continuity of $\mathcal J$. Next results are formulated under the assumptions $(\mathcal A_1)-(\mathcal A_3)$, despite the fact that they can be proven for more general convex functions $\mathcal J$.

\begin{lemma}
	Let $(\mathcal A_1)-(\mathcal A_3)$ be satisfied and the function $\mathcal J$ be defined by \eqref{J_min}.
	Then the following statements are equivalent:
	\begin{itemize}
		\item[(a)] $\mathcal J$ is coercive in $\mathbb R^n$;
		\item[(b)] The solution set $\mathcal K$ is nonempty and bounded;
		\item[(c)] There exist constants $\tilde c_{1}>0$ and $\tilde c_{2}\geq0$ such that 
		\begin{equation}
			\mathcal J(v)\geq \tilde c_{1}\|v\|-\tilde c_{2}\quad\text{for all } v\in\mathbb R^n.
			\label{lin_growth3}
		\end{equation}
	\end{itemize}
\label{lemma_K_t}
\end{lemma}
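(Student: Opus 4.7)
The plan is to establish the three-way equivalence as a cycle (c) $\Rightarrow$ (a) $\Rightarrow$ (b) $\Rightarrow$ (c), since each single implication admits a clean self-contained argument and convexity only needs to be exploited in the last step.

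The implication (c) $\Rightarrow$ (a) is immediate from the definition of coercivity: the inequality $\mathcal{J}(v) \geq \tilde c_1 \|v\| - \tilde c_2$ with $\tilde c_1 > 0$ forces $\mathcal{J}(v) \to +\infty$ as $\|v\| \to \infty$. For (a) $\Rightarrow$ (b) I would invoke the standard Weierstrass argument: by coercivity, every sublevel set $\{v : \mathcal{J}(v) \leq \alpha\}$ is bounded, and by continuity of $\mathcal{J}$ it is closed, hence compact. Choosing $\alpha$ large enough that the set is nonempty yields a global minimizer, so $\mathcal{K} \neq \emptyset$; boundedness of $\mathcal{K}$ follows from the fact that $\mathcal{K}$ itself is a sublevel set (at the minimum value).

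The main work lies in (b) $\Rightarrow$ (c), where convexity of $\mathcal{J}$ (inherited from $(\mathcal{A}_2)$) enters crucially. Fix any $u^* \in \mathcal{K}$, set $m := \mathcal{J}(u^*)$, and use boundedness of $\mathcal{K}$ to choose $r > 0$ so large that $\mathcal{K} \subset B(u^*, r)$. On the sphere $S_r = \{w : \|w - u^*\| = r\}$, which is compact, $\mathcal{J}$ attains some minimum value $m_r$; since $S_r \cap \mathcal{K} = \emptyset$ we have $m_r > m$. For any $v$ with $\|v - u^*\| > r$, I set $\lambda = r/\|v - u^*\| \in (0,1)$ and $w = (1-\lambda)u^* + \lambda v$, which lies on $S_r$. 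Convexity gives $\mathcal{J}(w) \leq (1-\lambda) m + \lambda \mathcal{J}(v)$, which upon rearrangement and use of $\mathcal{J}(w) \geq m_r$ yields
\[
\mathcal{J}(v) \;\geq\; m + \frac{m_r - m}{r}\,\|v - u^*\|.
\]
Combining with the triangle inequality $\|v - u^*\| \geq \|v\| - \|u^*\|$ produces a linear lower bound of the desired form on the exterior set $\{\|v - u^*\| > r\}$. On the complementary bounded region $\mathcal{J}(v) \geq m$ while $\|v\|$ is uniformly bounded, so the bound extends globally after enlarging the additive constant, and if necessary this constant can be replaced by its positive part to meet $\tilde c_2 \geq 0$.

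The hard step is clearly (b) $\Rightarrow$ (c): the geometric idea of projecting $v$ onto the sphere $S_r$ along the segment from $u^*$ is what converts the strict inequality $m_r > m$ (available only because $\mathcal{K}$ is bounded) into a quantitative linear growth estimate valid for all $v$. Note that assumption $(\mathcal{A}_3)$ plays no role in the argument, in agreement with the remark that the lemma in fact holds for general continuous convex $\mathcal{J}$.
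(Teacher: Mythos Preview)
Your proposal is correct and follows essentially the same cycle $(c)\Rightarrow(a)\Rightarrow(b)\Rightarrow(c)$ as the paper, with the same geometric idea for the key step $(b)\Rightarrow(c)$: enclose $\mathcal K$ in a ball, use compactness of the bounding sphere to get a strictly positive gap $m_r-m$, and extrapolate via convexity along the segment from a minimizer to $v$. The only cosmetic difference is that you center the ball at a fixed minimizer $u^*$ (so the interpolation parameter is explicitly $\lambda=r/\|v-u^*\|$), whereas the paper centers it at the origin and must additionally estimate the interpolation parameter via the triangle inequality; your variant is therefore slightly more direct, but the substance is identical.
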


\begin{proof}
$(a)\Rightarrow(b)$. If $\mathcal J$ is continuous and coercive in $\mathbb R^n$ then it is well-known that the solution set $\mathcal K$ to \eqref{J_min} is nonempty and bounded, see, for example, \cite{B99}.

$(b)\Rightarrow(c)$. Let $\mathcal K$ be nonempty and bounded. Then there exists $\rho>0$  such that $\|u\|<\rho$ for any $u\in\mathcal K$. Since $\partial B_\rho:=\{w\in\mathbb R^n\ |\; \|w\|=\rho\}$ is compact, the function $\mathcal J$ attains a minimum in $\partial B_\rho$. Hence, there exists $c>0$ such that
\begin{equation}
\mathcal J(w)\geq q+c\quad \text{for all } w\in\partial B_\rho,\quad\mbox{where } q=\inf_{v\in\mathbb R^n}\mathcal J(v)=\mathcal J(u).
\label{bound_compact}
\end{equation}
Let $v\in\mathbb R^n$, $\|v\|>\rho$. Then there exists $\beta_v\in(0,1)$ such that $\|(1-\beta_v)u+\beta_v v\|=\rho$. From the triangular inequality, it easily follows:
\begin{equation}
    \rho\geq\beta_v\|v\|-(1-\beta_v)\|u\|,\quad\mbox{i.e., } \;\;\beta_v\leq\frac{\rho+\|u\|}{\|v\|-\|u\|}.
    \label{bound_b}
\end{equation}
From convexity of $\mathcal J$ and \eqref{bound_compact}, we obtain:
\begin{equation*}
(1-\beta_v)q+\beta_v\mathcal J(v)= (1-\beta_v)\mathcal J(u)+\beta_v\mathcal J(v)\geq\mathcal J((1-\beta_v)u+\beta_v v)\stackrel{\eqref{bound_compact}}{\geq} q+c,
\end{equation*}
so that 
\begin{equation*}
\mathcal J(v)\geq \frac{c}{\beta_v}+ q\stackrel{\eqref{bound_b}}{\geq}\frac{c(\|v\|-\|u\|)}{\rho+\|u\|}+ q\geq \frac{c(\|v\|-\rho)}{2\rho}+ q.
\label{convexity_bound}
\end{equation*}
Therefore, \eqref{lin_growth3} holds for any $v\in\mathbb R^n$, $\|v\|>\rho$.
Let $v\in\mathbb R^n$, $\|v\|\leq\rho$. Then 
$\mathcal J(v)\geq q\geq \|v\|+q-\rho$. Thus \eqref{lin_growth3} holds for any $v\in\mathbb R^n$.

$(c)\Rightarrow(a)$. This statement is obvious.
\end{proof}

\begin{lemma}
	Let $(\mathcal A_1)-(\mathcal A_3)$ and the condition $\|b\|<c_1$ be satisfied. Then $\mathcal K$ is nonempty and bounded.
	\label{lemma_solv}
\end{lemma}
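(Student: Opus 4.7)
The plan is to reduce the statement to part (c) of Lemma \ref{lemma_K_t} by producing an explicit linear lower bound on $\mathcal J$, so that nonemptiness and boundedness of $\mathcal K$ follows immediately from the equivalence (a)$\Leftrightarrow$(b)$\Leftrightarrow$(c) already established.

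First I would apply the Cauchy--Schwarz inequality to the linear term in $\mathcal J$, writing $b^\top v \leq \|b\|\,\|v\|$ for every $v\in\mathbb R^n$. Combining this with the linear growth assumption $(\mathcal A_3)$, namely $\mathcal I(v)\geq c_1\|v\|-c_2$, yields
\begin{equation*}
\mathcal J(v) \;=\; \mathcal I(v) - b^\top v \;\geq\; (c_1 - \|b\|)\,\|v\| - c_2 \qquad \text{for all } v\in\mathbb R^n.
\end{equation*}
The hypothesis $\|b\|<c_1$ now gives $\tilde c_1 := c_1-\|b\| > 0$, and setting $\tilde c_2 := c_2 \geq 0$ shows that $\mathcal J$ satisfies condition (c) of Lemma \ref{lemma_K_t}.

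Finally, invoking the equivalence (c)$\Rightarrow$(b) of Lemma \ref{lemma_K_t}, we conclude that $\mathcal K$ is nonempty and bounded, which is the required claim. There is essentially no obstacle here: the argument is a one-line manipulation combining Cauchy--Schwarz with $(\mathcal A_3)$, and the only point worth emphasizing is that the strict inequality $\|b\|<c_1$ is exactly what is needed to keep the coefficient of $\|v\|$ strictly positive, so that Lemma \ref{lemma_K_t} can be applied. Any attempt to weaken this to $\|b\|\leq c_1$ would leave the bound possibly vacuous and would require a more delicate argument (for example, exploiting strict convexity of $\mathcal I$ along rays where equality in $(\mathcal A_3)$ is attained), which is not needed under the stated hypothesis.
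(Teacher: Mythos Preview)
Your proof is correct and essentially identical to the paper's own argument: both derive the inequality $\mathcal J(v)\geq (c_1-\|b\|)\|v\|-c_2$ via Cauchy--Schwarz and $(\mathcal A_3)$, and then invoke Lemma~\ref{lemma_K_t}. The only cosmetic difference is that the paper phrases the conclusion as coercivity of $\mathcal J$ (condition~(a)) whereas you cite condition~(c); since the derived inequality is precisely~(c) and trivially yields~(a), this is immaterial.
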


\begin{proof}
	From \eqref{lin_growth} and the definition of $\mathcal J$, we obtain the coercivity of $\mathcal J$:
	\begin{equation}
		\mathcal J(v)=\mathcal I(v)-b^\top v\geq (c_1-\|b\|)\|v\|-c_2\rightarrow+\infty\quad\mbox{as}\quad \|v\|\rightarrow+\infty.
		\label{J_bound}
	\end{equation}
	By Lemma \ref{lemma_K_t}, the set $\mathcal K$ is nonempty and bounded.
\end{proof}
From Lemma \ref{lemma_solv}, one can deduce that solvability of \eqref{alg_system} and \eqref{J_min} is not guaranteed for larger norms of the vector $b$. In Sections \ref{sec_LA} and \ref{sec_SSRM}, we present two different parametrizations of \eqref{alg_system} enabling us to study solvability of this system and related \textit{factors of safety} (FoS). Let us note that a more advanced result dealing with solvability of \eqref{J_min} is presented in Section \ref{subsec_solvability}.


\section{Parametrization of the vector $b$}
\label{sec_LA}

In this section, we consider the following parametrization of the system \eqref{alg_system}:
\begin{equation}
(\mathcal P_t)\quad \mbox{given $t\geq 0,\;$ find } u_t\in\mathbb R^n: \quad F(u_t)=tb.
\label{system_t}
\end{equation}
We analyze it under the assumptions $(\mathcal A_1)-(\mathcal A_3)$ and propose an advanced continuation technique describing the solution path. Analogously to Section \ref{sec_algebra}, we introduce the minimization form of $(\mathcal P_t)$:
\begin{equation}
	\mbox{find } u_t\in\mathbb R^n:\quad\mathcal J_t(u_t)\leq \mathcal J_t(v)\;\;\text{for all } v\in\mathbb R^n,\quad\mbox{where}\quad\mathcal J_t(v)=\mathcal I(v)-tb^\top v,
	\label{J_t_min}
\end{equation}
and denote the solution set as $\mathcal K_t$. Next, we define the limit value of $t$ representing FoS:
\begin{equation}
	t^*\;:=\;\mbox{{supremum} of } \; \bar t\geq 0\;\;\mbox{such that $\mathcal K_t$ is nonempty for any $t\in[0,\bar t\,]$.}
	\label{limit_load}
\end{equation}
By $(\mathcal A_1)$, the solution of $(\mathcal P_t)$ exists at least for $t=0$ and, thus, the definition of $t^*$ is meaningful. The case $t^*=+\infty$ may also occur, for example if $\mathcal I$ is quadratic with positive definite Hessian. The system $(\mathcal P_t)$ need not to have a solution for $t=t^*<+\infty$. If $t^*<1$ then the original system \eqref{alg_system} has no solution.

We start with the following fundamental result.
\begin{theorem}
Let $(\mathcal A_1)-(\mathcal A_3)$ be satisfied. Then $t^*>0$, $\mathcal K_t$ is nonempty and bounded for any $t\in[0,t^*)$ and $\mathcal K_{t^*}$ is either unbounded or empty for $t^*<+\infty$. In addition,
\begin{equation}
t^*=\sup\{t\geq0\ |\; \inf_{v\in\mathbb R^n}\mathcal J_t(v)>-\infty\}.
\label{limit_load2}
\end{equation}
\label{theorem_K}
\end{theorem}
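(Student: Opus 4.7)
The plan is to reduce the whole statement to a single technical observation and then deploy it four times. The key lemma I would establish first is the following: if $\inf_{v}\mathcal J_{\bar t}(v) > -\infty$ for some $\bar t > 0$, then for every $t\in[0,\bar t)$ the function $\mathcal J_t$ is coercive and hence, by Lemma~\ref{lemma_K_t}, the set $\mathcal K_t$ is nonempty and bounded. The proof is a one-line application of the convex-combination identity
\begin{equation*}
\mathcal J_t(v) \;=\; \Bigl(1-\tfrac{t}{\bar t}\Bigr)\mathcal I(v) + \tfrac{t}{\bar t}\mathcal J_{\bar t}(v)
\;\geq\; \Bigl(1-\tfrac{t}{\bar t}\Bigr)\bigl(c_1\|v\|-c_2\bigr) + \tfrac{t}{\bar t}\inf_{v}\mathcal J_{\bar t},
\end{equation*}
which yields linear growth with positive slope $(1-t/\bar t)c_1$ thanks to $(\mathcal A_3)$.

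With this tool in hand, $t^*>0$ follows from Lemma~\ref{lemma_solv} applied to any $t<c_1/\|b\|$. For $t\in[0,t^*)$, the supremum definition of $t^*$ lets me pick $\bar t\in(t,t^*]$ with $\mathcal K_{\bar t}$ nonempty, so that $\inf\mathcal J_{\bar t}=\mathcal J_{\bar t}(u_{\bar t})>-\infty$ and the key lemma yields $\mathcal K_t$ nonempty and bounded. For the statement about $\mathcal K_{t^*}$ when $t^*<+\infty$, I would argue by contradiction: were $\mathcal K_{t^*}$ nonempty and bounded, Lemma~\ref{lemma_K_t}(c) would give $\mathcal I(v)\geq t^*b^\top v+\tilde c_1\|v\|-\tilde c_2$, whence
\begin{equation*}
\mathcal J_t(v) \;\geq\; \bigl(\tilde c_1-|t-t^*|\,\|b\|\bigr)\|v\| - \tilde c_2
\end{equation*}
is coercive for every $t$ with $|t-t^*|<\tilde c_1/\|b\|$. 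Combined with the previous step, this would make $\mathcal K_t$ nonempty on $[0,t^*+\tilde c_1/\|b\|)$, contradicting the definition of $t^*$.

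For \eqref{limit_load2}, denote its right-hand side by $t^{**}$. The inequality $t^*\leq t^{**}$ is immediate since $\mathcal K_t\neq\emptyset$ forces $\inf\mathcal J_t=\mathcal J_t(u_t)>-\infty$. Conversely, for any $t<t^{**}$ I pick $\bar t\in(t,t^{**})$ with $\inf\mathcal J_{\bar t}>-\infty$, and the key lemma again supplies $\mathcal K_t\neq\emptyset$; this shows $\mathcal K_t$ is nonempty throughout $[0,t^{**})$ and hence $t^*\geq t^{**}$ by the supremum definition.

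The conceptual obstacle to watch for is the gap between ``$\mathcal J_t$ is bounded below'' and ``$\mathcal K_t$ is nonempty'': a bounded-below convex function need not attain its infimum, so \eqref{limit_load2} is not a tautology. The convex-combination identity above is precisely the device that upgrades a finite infimum at $\bar t$ into strict coercivity at every $t<\bar t$, thereby closing this gap and making the bookkeeping between the two characterizations of $t^*$ work.
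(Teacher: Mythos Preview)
Your proof is correct and follows essentially the same route as the paper: the convex-combination identity $\mathcal J_t=(1-t/\bar t)\mathcal I+(t/\bar t)\mathcal J_{\bar t}$ together with $(\mathcal A_3)$ to upgrade a finite infimum at $\bar t$ to coercivity at every $t<\bar t$, and Lemma~\ref{lemma_K_t}(c) to push past $t^*$ by contradiction. Your treatment of \eqref{limit_load2} via the two-sided inequality $t^*\leq t^{**}$ and $t^*\geq t^{**}$ is a bit more explicit than the paper's, which simply notes that \eqref{limit_load2} is a by-product of the coercivity argument, but the content is the same.
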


\begin{proof}
By Lemma \ref{lemma_solv}, $(\mathcal P_t)$ has a solution for sufficiently small $t>0$ satisfying $c_1>t\|b\|$, where $c_1>0$ is as in \eqref{lin_growth}. Therefore, $t^*>0$.

Let $\bar t< t^*$ be such that $\mathcal K_{\bar t}\neq\emptyset$ and $t\in[0,\bar t)$. Then $\inf_{v\in\mathbb R^n}\mathcal J_{\bar t}(v)=c>-\infty$ and
$$\mathcal J_t(v)=\frac{\bar t- t}{\bar t}\mathcal I(v)+\frac{t}{\bar t}\mathcal J_{\bar t}(v)\geq \frac{\bar t- t}{\bar t}(c_1\|v\|-c_2)+\frac{t}{\bar t}c\quad\text{for all } v\in\mathbb R^n.$$
From this and Lemma \ref{lemma_K_t}, it follows that $\mathcal K_t$ is nonempty and bounded. Since $\bar t< t^*$ was arbitrarily chosen, $\mathcal K_t$ is nonempty but bounded for any $t\in[0,t^*)$. In addition, \eqref{limit_load2} holds and $\mathcal K_t$ can be unbounded only for $t=t^*$.

Let $t^*<+\infty$ and suppose that $\mathcal K_{t^*}$ is nonempty and bounded. By Lemma \ref{lemma_K_t}, there exist $c_1^*>0$ and $c_2^*\geq0$ such that 
$$\mathcal J_{t^*}(v)\geq c_1^*\|v\|-c_2^*\quad\text{for all } v\in\mathbb R^n.$$
Hence, for any $\epsilon\in(0,c_1^*/\|b\|)$, we have
$$\mathcal J_{t^*+\epsilon}(v)=\mathcal J_{t^*}(v)-\epsilon b^\top v\geq (c_1^*-\epsilon\|b\|)\|v\|-c_2^*\quad\text{for all } v\in\mathbb R^n.$$
Using Lemma \ref{lemma_K_t} once again, we see that $\mathcal K_t\neq\emptyset$ also for $t=t^*+\epsilon$, which contradicts the definition of $t^*$. Therefore, $\mathcal K_{t^*}$ is either empty or unbounded.
\end{proof}

\begin{remark}
	(EP interpretation.) \emph{In EP, the safety factor $t^*$ represents the \textit{limit load}. For $t\in[0,t^*)$, the unique solution $u_t\in\mathcal K_t$ is expected on the basis of numerical experiments with the Newton method, because it is observed that generalized Hessians to $\mathcal J_t$ are positive definite in vicinity of the solution.}
\end{remark}

Now, we focus on the determination of $t^*$. The simplest way how to find $t^*$ is to gradually increase the parameter $t$. However, such a treatment has several drawbacks. First, it is not guaranteed that the problem $(\mathcal P_t)$ has a solution for a chosen value of $t$. From the numerical point of view, it is difficult to distinguish whether the problem does not have any solution or an iterative solver (e.g. Newton-like method) converges too slowly. Secondly, a small change of $t$ may cause a very large change of $u_t$ in the vicinity of $t^*$. Therefore, our aim is to introduce a more suitable continuation technique, which is based on the following results.

\begin{lemma}
	Let $(\mathcal A_1)-(\mathcal A_3)$ be satisfied, $0\leq t_1<t_2\leq t^*$, $u_1\in \mathcal K_{t_1}$ and $u_2\in \mathcal K_{t_2}$. Then
	\begin{equation}
		b^\top u_1<b^\top u_2.	
		\label{bu}
	\end{equation}
	\label{lemma_bu}
\end{lemma}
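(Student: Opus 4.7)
The plan is to exploit the fact that $u_1$ and $u_2$ are minimizers of $\mathcal{J}_{t_1}$ and $\mathcal{J}_{t_2}$, respectively, to extract both a monotonicity inequality and the conditions for its equality case.

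First I would write down the two defining minimization inequalities,
\begin{equation*}
\mathcal I(u_1)-t_1 b^\top u_1 \;\leq\; \mathcal I(u_2)-t_1 b^\top u_2, \qquad
\mathcal I(u_2)-t_2 b^\top u_2 \;\leq\; \mathcal I(u_1)-t_2 b^\top u_1,
\end{equation*}
which follow because $u_i\in\mathcal K_{t_i}$ minimizes $\mathcal J_{t_i}$ on $\mathbb R^n$. Adding them cancels $\mathcal I(u_1)$ and $\mathcal I(u_2)$ and leaves $(t_2-t_1)(b^\top u_2-b^\top u_1)\geq 0$, so by $t_1<t_2$ one obtains the weak monotonicity $b^\top u_1\leq b^\top u_2$.

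The main work, and the subtle point, is upgrading this to the strict inequality \eqref{bu}. I would argue by contradiction: assume $b^\top u_1=b^\top u_2$. Then the sum of the two displayed inequalities is an equality, which forces each of them to be an equality separately. In particular, $\mathcal I(u_1)-t_2 b^\top u_1 = \mathcal I(u_2)-t_2 b^\top u_2$, i.e., $\mathcal J_{t_2}(u_1)=\mathcal J_{t_2}(u_2)$, so $u_1$ is also a minimizer of $\mathcal J_{t_2}$ and hence $u_1\in\mathcal K_{t_2}$. By the gradient characterization from $(\mathcal A_2)$ and \eqref{system_t}, this yields $F(u_1)=t_2 b$, whereas $u_1\in\mathcal K_{t_1}$ gives $F(u_1)=t_1 b$. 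Subtracting, $(t_2-t_1)b=0$, which contradicts $t_1<t_2$ together with $b\neq 0$ from $(\mathcal A_1)$.

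The main obstacle I expect is precisely the strict-inequality step: the weak inequality is a free consequence of convex monotonicity, but strictness requires invoking the equivalence between minimizers of $\mathcal J_t$ and solutions of $F(u)=tb$, together with $b\neq 0$. Notice the argument uses nothing beyond convexity of $\mathcal I$ and assumption $(\mathcal A_1)$, so the linear-growth hypothesis $(\mathcal A_3)$ and the upper bound $t_2\leq t^*$ are used only implicitly to guarantee that $u_1$ and $u_2$ exist.
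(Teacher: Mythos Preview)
Your proof is correct and follows essentially the same approach as the paper: the paper chains the two minimization inequalities into a single string rather than adding them, but the content is identical, and the strict-inequality step---equality forces $u_1\in\mathcal K_{t_1}\cap\mathcal K_{t_2}$, hence $F(u_1)=t_1 b=t_2 b$, contradicting $b\neq 0$---is exactly the paper's argument.
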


\begin{proof}
	It holds:
	\begin{eqnarray*}
		\mathcal J_{t_2}(u_2)&\leq& \mathcal J_{t_2}(u_1)=\mathcal J_{t_1}(u_1)+(t_1-t_2)b^\top u_1\leq \mathcal J_{t_1}(u_2)+(t_1-t_2)b^\top u_1\\
		&=&\mathcal J_{t_2}(u_2)+(t_1-t_2)b^\top (u_1-u_2).
	\end{eqnarray*}
	Hence, $b^\top u_1\leq b^\top u_2.$ Suppose that $b^\top u_1= b^\top u_2.$ Then the equality must hold in the above chain of inequalities, which means that $u_1$ solves $(\mathcal P_{t_1})$ and also $(\mathcal P_{t_2})$, i.e., $F(u_1)=t_1 b=t_2 b$. This contradicts the assumption $(\mathcal A_1)$ saying that $b\neq0$. Therefore, \eqref{bu} is satisfied.
\end{proof}

\begin{theorem}
Let $(\mathcal A_1)-(\mathcal A_3)$ be satisfied. Then for any $\omega\geq0$ there exists a unique $t_\omega\in[0,t^*]$ for which the problem $(\mathcal P_{t_\omega})$ has a solution $u_\omega\in\mathcal K_{t_\omega}$ such that $b^\top u_\omega=\omega$. In addition, the function $\psi\colon\omega\mapsto t_\omega$ is non-decreasing, continuous in $\mathbb R_+$, $\psi(0)=0$, and $\lim_{\omega\rightarrow+\infty}\psi(\omega)=t^*$.
\label{theorem_omega}
\end{theorem}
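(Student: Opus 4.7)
The plan is to construct $u_\omega$ by a constrained minimization of $\mathcal I$ and to extract $t_\omega$ as the corresponding Lagrange multiplier, then deduce uniqueness, monotonicity and the boundary data $\psi(0)=0$ from Lemma \ref{lemma_bu}, and finally establish continuity and the behaviour at infinity by a compactness argument enabled by $(\mathcal A_3)$.

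\textbf{Construction of $(t_\omega,u_\omega)$.} I would fix $\omega\ge 0$ and minimize $\mathcal I$ on the closed hyperplane $H_\omega=\{v\in\mathbb R^n\mid b^\top v=\omega\}$. Because $(\mathcal A_3)$ forces $\mathcal I(v)\to+\infty$ as $\|v\|\to+\infty$ in $\mathbb R^n$, the restriction of $\mathcal I$ to $H_\omega$ is continuous and coercive, so a minimizer $u_\omega\in H_\omega$ exists. Since $b\neq 0$, the affine constraint is qualified, and the first-order optimality condition $\nabla\mathcal I(u_\omega)\perp b^\perp$ gives $\nabla\mathcal I(u_\omega)=t_\omega\,b$ for some $t_\omega\in\mathbb R$, i.e.\ $u_\omega\in\mathcal K_{t_\omega}$ with $b^\top u_\omega=\omega$. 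The bound $t_\omega\le t^*$ is then immediate from \eqref{limit_load}, while $t_\omega\ge 0$ follows by observing that $\nabla\mathcal I(0)=F(0)=0$ and $(\mathcal A_2)$ make $0$ the global minimum of $\mathcal I$, so the value function $\phi(\omega):=\mathcal I(u_\omega)$ is convex, nonnegative, attains its minimum at $\omega=0$, and hence is non-decreasing on $\mathbb R_+$ with subgradient $t_\omega\ge 0$.

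\textbf{Uniqueness, monotonicity and $\psi(0)=0$.} These are consequences of Lemma \ref{lemma_bu}. Uniqueness: two admissible values $t_1<t_2\in[0,t^*]$ representing the same $\omega$ would force $\omega=b^\top u_1<b^\top u_2=\omega$, a contradiction. The same inequality applied to $u_{\omega_1}$ and $u_{\omega_2}$ shows $\omega_1<\omega_2\Rightarrow\psi(\omega_1)\le\psi(\omega_2)$. Finally, $0\in\mathcal K_0$ with $b^\top 0=0$ and uniqueness together give $\psi(0)=0$.

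\textbf{Continuity and the limit at infinity.} The key a priori bound comes from comparing with $v=0$: $\mathcal I(u_\omega)-t_\omega\omega=\mathcal J_{t_\omega}(u_\omega)\le\mathcal J_{t_\omega}(0)=0$, so $\mathcal I(u_\omega)\le t^*\omega$, and $(\mathcal A_3)$ then yields $\|u_\omega\|\le(t^*\omega+c_2)/c_1$. For any sequence $\omega_k\to\omega_0\in\mathbb R_+$, the sequence $\{u_{\omega_k}\}$ is bounded, so a subsequence converges to some $u^*$; continuity of $F$ gives $F(u^*)=\bar t\,b$, where $\bar t=\lim\psi(\omega_k)$ exists by monotonicity, and $b^\top u^*=\omega_0$. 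By the uniqueness step, $\bar t=\psi(\omega_0)$, establishing continuity. For the limit at infinity, $\psi$ is non-decreasing and bounded above by $t^*$, so the limit $\bar t^\infty$ exists in $[0,t^*]$; if $\bar t^\infty<t^*$, picking any $t\in(\bar t^\infty,t^*)$ and any $u\in\mathcal K_t$ from Theorem \ref{theorem_K} and setting $\omega:=b^\top u$ would give $\psi(\omega)=t>\bar t^\infty$, a contradiction. The case $t^*=+\infty$ follows by the same argument showing that $\psi$ is unbounded.

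The hardest step will be the continuity claim, because it rests on converting the linear growth in $(\mathcal A_3)$ into a uniform a priori bound on $\|u_\omega\|$ along bounded sequences of $\omega$ and then combining this compactness with uniqueness from Lemma \ref{lemma_bu} to identify the subsequential limit with $\psi(\omega_0)$; without $(\mathcal A_3)$ no such compactness is available.
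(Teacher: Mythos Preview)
Your approach is essentially the paper's: constrained minimization of $\mathcal I$ on $\{b^\top v=\omega\}$, extraction of $t_\omega$ as the Lagrange multiplier, Lemma~\ref{lemma_bu} for uniqueness and monotonicity, compactness for continuity, and the same contradiction for the limit at infinity. Your verification of $t_\omega\ge 0$ via the convex value function $\phi(\omega)=\mathcal I(u_\omega)$ is a valid alternative to the paper's direct computation $t_\omega\omega=F(u_\omega)^\top u_\omega=[F(u_\omega)-F(0)]^\top(u_\omega-0)\ge 0$.

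There is one genuine gap in the continuity step: your a priori bound $\|u_\omega\|\le (t^*\omega+c_2)/c_1$ is vacuous when $t^*=+\infty$, so boundedness of $\{u_{\omega_k}\}$ is not established in that case. The repair is immediate once you use monotonicity of $\psi$ locally: for $\omega_k\in(\omega_0-\delta,\omega_0+\delta)$ you have $t_{\omega_k}\le\psi(\omega_0+\delta)<+\infty$, whence $\mathcal I(u_{\omega_k})\le\psi(\omega_0+\delta)(\omega_0+\delta)$ and $(\mathcal A_3)$ gives the bound. The paper sidesteps the issue by comparing with a fixed $u_{\omega+\delta}$ to obtain $\mathcal I(u_n)\le\mathcal I(u_{\omega+\delta})$ directly, which is finite regardless of $t^*$. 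A minor wording point: ``$\bar t=\lim\psi(\omega_k)$ exists by monotonicity'' is loose since $\omega_k$ need not be monotone; however, your subsequence argument combined with continuity of $F$, $b\neq 0$, and the uniqueness of $t_\omega$ already forces convergence of the full sequence $t_{\omega_k}$, so this is only cosmetic.
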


\begin{proof}
Let $\omega\geq0$ be given. Since $b\neq0$, from $(\mathcal A_1)-(\mathcal A_3)$ it follows that there exists $u_\omega\in\mathbb R^n$, $b^\top u_\omega=\omega$, such that 
\begin{equation}
\mathcal I(u_\omega)=\min_{\substack{v\in\mathbb R^n\\ b^\top v=\omega}} \mathcal I(v).
\label{I_omega_min}
\end{equation}
In addition, $u_\omega$ solves \eqref{I_omega_min} if and only if the following optimality condition is satisfied:
\begin{equation}
    F(u_\omega)^\top v=0\quad\text{for all } v\in\mathbb R^n,\;\mbox{such that }b^\top v=0,
\end{equation}
i.e., if and only if the vectors $F(u_\omega)$ and $b$ are parallel. Hence, there exists $t_\omega\in\mathbb R$ such that
\begin{equation}
F(u_\omega)=t_\omega b;\quad b^\top u_\omega=\omega.
\label{saddle-system}
\end{equation}
From \eqref{saddle-system}, convexity and differentiability of $\mathcal I$, we obtain:
$$t_\omega\omega=t_\omega b^\top u_\omega=F(u_\omega)^\top u_\omega=[F(u_\omega)-F(0)]^\top (u_\omega-0)\geq 0,$$
and so $t_\omega\geq0$. Further, we see that $u_\omega\in\mathcal K_{t_\omega}$. Therefore, $t_\omega\leq t^*$ as follows from the definition of $t^*$. If there exist $t_1,t_2\in[0,t^*]$ and $u_1\in\mathcal K_{t_1}$, $u_2\in\mathcal K_{t_2}$ such that $b^\top u_1=b^\top u_2=\omega$, then $t_1=t_2$ by Lemma \ref{lemma_bu}. Therefore, $t_\omega$ is uniquely defined.

The function $\psi\colon\omega\mapsto t_\omega$ is non-decreasing as follows from Lemma \ref{lemma_bu}. To prove continuity of $\psi$, consider a sequence $\{\omega_n\}_n\subset(\omega-\delta,\omega+\delta)$, $\delta>0$, converging to $\omega>0$. Let $(u_n,t_n)$ denote a solution to \eqref{saddle-system} for $\omega:=\omega_n$. Since $\psi$ is non-decreasing, we have $t_n\leq t_{\omega+\delta}$. Next,
\begin{align*}
c_1\|u_n\|-c_2&\leq \mathcal I(u_n)=\mathcal J_{t_n}(u_n)+t_nb^\top u_n\leq \mathcal J_{t_n}(u_{\omega+\delta})+t_nb^\top u_n\\
&=\mathcal I(u_{\omega+\delta})+t_n(b^\top u_n-b^\top u_{\omega+\delta})\leq \mathcal I(u_{\omega+\delta})\quad \text{for all } n>0.
\end{align*}
Hence, the sequences $\{u_n\}$ and $\{t_n\}$ are bounded and it is easy to see that their accumulation points solve  \eqref{saddle-system}. Since the solution component $t_{\omega}$ is unique,  $t_n\rightarrow t_\omega$ as $n\rightarrow+\infty$. Therefore, the function $\psi\colon\omega\mapsto t_\omega$ is continuous.

From the established properties of $\psi$, it follows that there exists the limit of $\psi(\omega)$ for $\omega\rightarrow+\infty$, which is less than or equal to $t^*$. Suppose that this limit is less than $t \in (0, t^*)$. By the definition of $t^*$, the solution set $\mathcal K_t$ is nonempty and thus there exists $u_t\in\mathcal K_t$. If we set $\tilde\omega=b^\top u_t$ then $\psi(\tilde\omega)=t$ which is a contradiction to $\lim_{\omega\rightarrow+\infty}\psi(\omega)<t$. Therefore, $\lim_{\omega\rightarrow+\infty}\psi(\omega)=t^*$.
\end{proof}

On the basis of Theorem \ref{theorem_omega}, one can use the function $\psi:\omega\mapsto t_\omega$ to suggest more advanced continuation method, see Figure \ref{fig_omega}. In particular, $\omega$ is a control parameter, which is increased up to $+\infty$. For any $\omega$, the extended system \eqref{saddle-system} is solved. The existence of its  solution $(u_\omega,t_\omega)$ is the main advantage of this \textit{indirect continuation} in comparison with the direct continuation using $t$. One can start with a constant increment of $\omega$. If it is observed that the corresponding increments of $t_\omega$ are negligible then it is convenient to increase the increments of $\omega$ to be sure that we are close to the searched value $t^*$.

\begin{figure}
	\begin{center}
		\begin{picture}(140,90)
			\put(0,0){\vector(0,1){90}} \put(0,0){\vector(1,0){140}}
			{\thicklines
				\put(0,0){\line(1,2){20}}
				\qbezier(20,40)(30,70)(140,70)
			}
			\multiput(0,73)(4,0){35}{\put(0,0){\line(1,0){2}}}
			
			\put(-4,88){\makebox(0,0)[r]{$t$}}
			\put(138,67){\makebox(0,0)[t]{$\psi(\omega)$}}
			\put(-2,72){\makebox(0,0)[r]{$t^*$}}
			\put(138,-4){\makebox(0,0)[t]{$\omega$}}
		\end{picture}
	\end{center}
	\caption{Scheme of the indirect continuation method with the control parameter $\omega$.}	
	\label{fig_omega}
\end{figure}

\begin{remark}(EP interpretation.)
	\emph{In EP, the term $b^\top u$ represents the work of external forces. Therefore, the control parameter $\omega$ has a clear physical meaning. The indirect continuation method was originally introduced in \cite{SHHC15, CHKS15} for the von Mises yield criterion and then extended in \cite{HRS15, HRS16, SHRR21} to more general EP models and even to infinite dimensional EP problems. Unlike these papers, we have proven the properties of the function $\psi$ without necessity to define a dual problem to $(\mathcal P_t)$.     
    }
\end{remark}


\section{Limit analysis problem and its importance}
\label{subsec_LA}

In this section we present a specific convex optimization problem, called \textit{the limit analysis (LA) problem}, which enables us to define and analyze the safety factor $t^*$ in another way, see Section \ref{subsec_LA_derivation}. We also illustrate how the achieved theoretical results can be applied to examples with known analytical solutions. In Section \ref{subsec_solvability}, we show that the LA problem has also a potential to be useful in solvability analysis of more general convex optimization problems.

\subsection{Formulation and analysis of the LA problem}
\label{subsec_LA_derivation}

Let us recall that there exists $(u_\omega,t_\omega)$ such that \eqref{saddle-system} is satisfied for a given $\omega>0$. If we set $w_\omega=u_\omega/\omega$, then the pair $(w_\omega,t_\omega)$ solves 
\begin{equation}
	F(\omega w_\omega)=t_\omega b;\quad b^\top w_\omega=1.
	\label{saddle-system2}
\end{equation}
This system represents the KKT conditions characterizing a saddle-point of the Lagrangian
$$\mathcal L(v,t):=\mathcal I_\omega(v)-t(b^\top v-1),$$
where
\begin{equation}
\mathcal I_\omega(v)=\frac{1}{\omega}\mathcal I(\omega v),\quad \nabla\mathcal I_\omega(v)=F(\omega v).
	\label{I_omega}
\end{equation}
Let us observe that $t_\omega$ is the Lagrange multiplier releasing the constraint $b^\top v_\omega=1$. From the existence of the saddle-point $(w_\omega,t_\omega)$, we have the following duality relationships, see \cite{ET74}:
\begin{eqnarray}
\tilde t_\omega& :=&\mathcal I_\omega(w_\omega)=\min_{\substack{v\in\mathbb R^n\\ b^\top v=1}} \mathcal I_\omega(v)\label{duality0}\\
&=&\min_{v\in \mathbb R^n}\sup_{t\in\mathbb R}\mathcal L(v,t)=\max_{t\in\mathbb R}\inf_{v\in \mathbb R^n}\mathcal L(v,t)=\nonumber\\ 
&=&\max_{t\in\mathbb R}\Big\{t+\inf_{v\in\mathbb R^n}[\mathcal I_\omega(v)-tb^\top v]\Big\}=\max_{t\in\mathbb R}\Big\{t+\frac{1}{\omega}\inf_{v\in\mathbb R^n}[\mathcal I(v)-tb^\top v]\Big\}\nonumber\\
&=&\max_{t\in[0,t^*]}\Big\{t+\frac{1}{\omega}\mathcal J_t(u_t)\Big\}=t_\omega+\frac{1}{\omega}\mathcal J_{t_\omega}(u_{\omega}).
\label{duality}
\end{eqnarray}
Since $\mathcal J_{t_\omega}(u_{\omega})\leq\mathcal J_{t_\omega}(0)=0$, (\ref{duality0}), (\ref{duality}) and Theorem \ref{theorem_omega} yield
\begin{equation}
\tilde t_\omega\leq t_\omega\leq t^*\quad\text{for all } \omega>0,
\label{tilde_t_bound}
\end{equation}
\begin{equation}
	\tilde t_\omega\geq t+\frac{1}{\omega}\mathcal J_t(u_t)\stackrel{(\omega\rightarrow+\infty)}{\rightarrow} t\quad\text{for all } t\in[0,t^*),
	\label{tilde_t_bound2}
\end{equation}
and consequently,
\begin{equation}
	\lim_{\omega\rightarrow+\infty}\tilde t_\omega=\lim_{\omega\rightarrow+\infty} t_\omega=t^*.
	\label{tilde_t_bound3}
\end{equation}
Next, we investigate the properties of the convex functions $\{\mathcal I_\omega\}_\omega$ for $\omega\rightarrow+\infty$. We have the following result.
\begin{lemma}
	Let the assumptions $(\mathcal A_1)-(\mathcal A_3)$ be satisfied. Then for any $v\in\mathbb R^n$ the mapping $\omega\mapsto\mathcal I_\omega(v)$ is nondecreasing in $\mathbb R_+$ and 
	\begin{equation}
		\mathcal I_{\omega}(v)\leq\mathcal I_\infty(v)\quad\text{for all } \omega>0,
		\label{I_omega+}
	\end{equation}
	where
	\begin{equation}
		\mathcal I_\infty\colon \mathbb R^n\rightarrow \mathbb R_+\cup\{+\infty\},\quad \mathcal I_\infty(v):=\lim_{\omega\rightarrow+\infty}\mathcal I_{\omega}(v),\quad v\in\mathbb R^n.
		\label{I_infty}
	\end{equation}
In addition, the effective domain of $\mathcal I_\infty$,
\begin{equation}
	\mathcal C:=\mathrm{dom}\, \mathcal I_\infty=\{v\in\mathbb R^n\ |\;\; \mathcal I_\infty(v)<+\infty\},
	\label{C}
\end{equation}
is a nonempty convex cone with vertex at $v=0$ and the function $\mathcal I_\infty$ is convex, positively homogeneous of degree 1 and coercive in $\mathcal C$. If $S$ is an open convex subset of $\mathcal C$ then the restriction of $\mathcal I_\infty$ on $S$ is a continuous function in $S$.
\label{lemma_I_infty}
\end{lemma}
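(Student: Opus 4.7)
My plan is to derive all stated properties from the single identity $\mathcal I_\omega(v)=\mathcal I(\omega v)/\omega$ combined with convexity of $\mathcal I$ and the normalization $\mathcal I(0)=0$. Only the concluding continuity claim will appeal to an external convex-analytic fact; the remaining parts reduce to rescaling and passage to the pointwise limit.

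First, for monotonicity of $\omega\mapsto \mathcal I_\omega(v)$, I would fix $0<\omega_1<\omega_2$ and write the convex combination $\omega_1 v=(\omega_1/\omega_2)(\omega_2 v)+(1-\omega_1/\omega_2)\cdot 0$. Convexity of $\mathcal I$ together with $\mathcal I(0)=0$ then yields $\mathcal I(\omega_1 v)\leq (\omega_1/\omega_2)\mathcal I(\omega_2 v)$, which after dividing by $\omega_1$ is exactly $\mathcal I_{\omega_1}(v)\leq \mathcal I_{\omega_2}(v)$. Inequality \eqref{I_omega+} is then immediate, since a non-decreasing function is bounded above by its limit, and this limit (possibly $+\infty$) exists for every $v$, so $\mathcal I_\infty$ is well defined by \eqref{I_infty}.

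Next, for the structure of $\mathcal C$ and the properties of $\mathcal I_\infty$: non-emptiness of $\mathcal C$ follows from $\mathcal I_\omega(0)=0$, hence $\mathcal I_\infty(0)=0$ and $0\in\mathcal C$. Positive homogeneity of degree $1$ is a rescaling of the parameter: for $\lambda>0$, $\mathcal I_\omega(\lambda v)=\lambda\mathcal I_{\lambda\omega}(v)$, so passing $\omega\to+\infty$ gives $\mathcal I_\infty(\lambda v)=\lambda\mathcal I_\infty(v)$. This simultaneously shows $\mathcal C$ is closed under positive scaling, and together with $0\in\mathcal C$ makes $\mathcal C$ a cone with vertex at $0$. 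Convexity of each $\mathcal I_\omega$ is inherited from $\mathcal I$ and preserved by the pointwise limit, so $\mathcal I_\infty$ is convex on $\mathbb R^n$ with values in $\mathbb R\cup\{+\infty\}$; in particular, $\mathcal C$ is convex. Coercivity comes from $(\mathcal A_3)$: the estimate $\mathcal I_\omega(v)\geq c_1\|v\|-c_2/\omega$ survives the passage $\omega\to+\infty$ and yields $\mathcal I_\infty(v)\geq c_1\|v\|$ on all of $\mathbb R^n$, which also confirms that $\mathcal I_\infty$ takes values in $\mathbb R_+\cup\{+\infty\}$.

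The continuity claim is the one step that does not reduce to direct computation and is what I regard as the main obstacle. Given an open convex $S\subset\mathcal C$, the function $\mathcal I_\infty$ is convex and finite on $S$, and because $S$ is open in $\mathbb R^n$ and contained in $\mathcal C$, it lies in the interior of the effective domain of $\mathcal I_\infty$. I would then invoke the classical result that a proper convex function on $\mathbb R^n$ is continuous (in fact locally Lipschitz) on the interior of its effective domain to conclude continuity of the restriction of $\mathcal I_\infty$ to $S$.
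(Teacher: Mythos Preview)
Your proposal is correct and follows essentially the same route as the paper: monotonicity via the convex combination $\omega_1 v=(\omega_1/\omega_2)(\omega_2 v)+(1-\omega_1/\omega_2)\cdot 0$ with $\mathcal I(0)=0$, positive homogeneity by the substitution $\omega\mapsto\lambda\omega$, convexity and coercivity by passing the pointwise estimates to the limit, and continuity on an open convex $S\subset\mathcal C$ by the standard convex-analytic fact (the paper cites Rockafellar, Theorem~10.1).
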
 
\begin{proof}
	Let $0<\omega_1\leq\omega_2$ and $v\in\mathbb R^n$ be given. Then from the assumptions on $\mathcal I$, we have:
	$$\mathcal I_{\omega_1}(v)=\frac{1}{\omega_1}\mathcal I(\omega_1v)=\frac{1}{\omega_1}\mathcal I\Big(\frac{\omega_1}{\omega_2}\omega_2v\Big)\leq\frac{1}{\omega_2}\mathcal I(\omega_2v)+\frac{1}{\omega_1}\Big(1-\frac{\omega_1}{\omega_2}\Big)\mathcal I(0)=\mathcal I_{\omega_2}(v).$$ 
	This inequality proves \eqref{I_omega+}.
	
	Clearly, $\mathcal I_\infty(0)=0$. Therefore the set $\mathcal C$ is nonempty. Let $\alpha>0$. Then 
	$$\mathcal I_\infty(\alpha v)=\lim_{\omega\rightarrow+\infty}\frac{1}{\omega}\mathcal I(\omega\alpha v)=\alpha\lim_{\omega\rightarrow+\infty}\frac{1}{\alpha\omega}\mathcal I(\omega\alpha v)=\alpha\mathcal I_\infty(v),$$
	i.e., $\mathcal I_\infty$ is positively homogeneous of degree 1 (1-positively homogeneous). Consequently, $\mathcal C$ is a cone.
	
	Let $u,v\in\mathcal C$ and $\alpha\in[0,1]$. From convexity of $\mathcal I_{\omega}$, it follows
	$$\mathcal I_\infty(\alpha u+(1-\alpha)v)\leq \alpha\mathcal I_\infty(u)+(1-\alpha)\mathcal I_\infty(v)<+\infty.$$
	Hence, $\mathcal C$ is a convex set and $\mathcal I_\infty$ is a convex function in $\mathcal C$.
	
	From \eqref{lin_growth} and the definition of $\mathcal I_\infty$, we have
	$$\mathcal I_\omega(v)\geq c_1\|v\|-\frac{c_2}{\omega},\quad\mbox{so that } \;\mathcal I_\infty(v)\geq c_1\|v\|.$$
	This inequality says that $\mathcal I_\infty$ is coercive in $\mathcal C$. Finally, continuity of $\mathcal I_\infty$ on any open convex subset of $\mathcal C$ follows from \cite[Theorem 10.1]{R72}. 
\end{proof}

The \textit{limit analysis} (LA) problem is the following constrained minimization problem defining a value $t_\infty\in\mathbb R_+\cup\{+\infty\}$:
\begin{equation}
t_\infty:=\inf_{\substack{v\in\mathbb R^n\\ b^\top v=1}}\ \mathcal I_\infty(v)=\inf_{\substack{v\in\mathcal C\\ b^\top v=1}}\ \mathcal I_\infty(v)
\label{LA_problem}
\end{equation}
If the set $\{v\in\mathcal C\ |\; b^\top v=1\}$ is empty then $t_\infty=+\infty$ as it is usual in convex analysis. Otherwise, $t_\infty$ is a finite and nonnegative value. To analyze the LA problem and relate $t_\infty$ to $t^*$, we need the additional assumptions which complete $(\mathcal A_1)-(\mathcal A_3)$:
\begin{itemize}
	\item[$(\mathcal A_4)$] \textit{For any sequence $\{v_\omega\}\subset\mathbb R^n$ such that $\{\mathcal I_\omega(v_\omega)\}$ is bounded and $v_\omega\rightarrow v_\infty$ as $\omega\rightarrow+\infty$, the following properties hold:} 
		\begin{equation}
		v_\infty \in\mathcal C\quad\mbox{and}\quad \liminf_{\omega\rightarrow+\infty}\mathcal I_\omega(v_\omega)\geq\mathcal I_\infty(v_\infty).
		\label{A4}
		\end{equation}
	\item[$(\mathcal A_5)$] \textit{For any $v\in\mathcal C$ there exists a constant $c_v>0$ such that}
	\begin{equation}
		\omega\mathcal I_\infty(v)-c_v\leq \mathcal I(\omega v)\leq \omega\mathcal I_\infty(v)\quad\mbox{for all } \omega\geq0.
		\label{I_infty_bound}
	\end{equation}
\end{itemize}

\begin{theorem}
Let the assumptions $(\mathcal A_1)-(\mathcal A_4)$ be satisfied. Then
\begin{equation}
	t^*=t_\infty.
	\label{LA}
\end{equation}
In addition, if $t^*<+\infty$ then the sequence $\{w_\omega\}_\omega$ of solutions to \eqref{saddle-system2} is bounded and its accumulation points solve \eqref{LA_problem}.
\label{theorem_LA}
\end{theorem}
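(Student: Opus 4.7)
The plan is to prove the two inequalities $t^* \leq t_\infty$ and $t^* \geq t_\infty$ separately, leaning on the monotonicity $\mathcal I_\omega \leq \mathcal I_\infty$ from Lemma \ref{lemma_I_infty}, the limit identity $\tilde t_\omega \to t^*$ from \eqref{tilde_t_bound3}, and the lower semicontinuity assumption $(\mathcal A_4)$.

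For the easy direction $t^* \leq t_\infty$, I would fix an arbitrary admissible $v \in \mathcal C$ with $b^\top v = 1$ and chain
$$\tilde t_\omega \leq \mathcal I_\omega(v) \leq \mathcal I_\infty(v),$$
using the minimization property of $\tilde t_\omega$ together with \eqref{I_omega+}. Letting $\omega \to +\infty$ and invoking \eqref{tilde_t_bound3} gives $t^* \leq \mathcal I_\infty(v)$; passing to the infimum over feasible $v$ yields $t^* \leq t_\infty$. In particular, if $t^* = +\infty$ this forces $t_\infty = +\infty$ and the theorem is trivially true, so from here on I assume $t^* < +\infty$.

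For the reverse inequality I would first show that $\{w_\omega\}$ is bounded, then pass to the limit along a subsequence. Boundedness uses the growth estimate $\mathcal I_\omega(v) \geq c_1\|v\| - c_2/\omega$ derived inside the proof of Lemma \ref{lemma_I_infty}: since $\mathcal I_\omega(w_\omega) = \tilde t_\omega$ converges to the finite value $t^*$, it is bounded, and the growth estimate yields a uniform bound on $\|w_\omega\|$. Extract a convergent subsequence $w_{\omega_k} \to w_\infty$; continuity of the linear constraint gives $b^\top w_\infty = 1$, and $\{\mathcal I_{\omega_k}(w_{\omega_k})\}$ is bounded, so $(\mathcal A_4)$ applies. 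It delivers simultaneously that $w_\infty \in \mathcal C$ (so $w_\infty$ is feasible in \eqref{LA_problem}) and
$$t^* \;=\; \lim_{k\to\infty} \mathcal I_{\omega_k}(w_{\omega_k}) \;=\; \liminf_{k\to\infty} \mathcal I_{\omega_k}(w_{\omega_k}) \;\geq\; \mathcal I_\infty(w_\infty) \;\geq\; t_\infty.$$
Combined with the first inequality this gives $t^* = t_\infty$. Moreover, the same chain shows $\mathcal I_\infty(w_\infty) \leq t_\infty$, so every accumulation point $w_\infty$ of $\{w_\omega\}$ is a minimizer for \eqref{LA_problem}.

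The main obstacle is the reverse inequality: not the boundedness of $\{w_\omega\}$, which is a direct consequence of the linear growth built into $(\mathcal A_3)$, but the passage from $\mathcal I_\omega$ to $\mathcal I_\infty$ at a limit point $w_\infty$ that a priori might escape the effective domain $\mathcal C$ or along which $\mathcal I_\infty$ might jump upward. The assumption $(\mathcal A_4)$ is tailored precisely to exclude both pathologies in one stroke, which is why it is the decisive ingredient.
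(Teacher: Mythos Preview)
Your proof is correct and follows essentially the same approach as the paper's own argument: both directions are obtained exactly as you describe, via the monotonicity $\mathcal I_\omega\leq\mathcal I_\infty$ and \eqref{tilde_t_bound3} for $t^*\leq t_\infty$, and via the linear-growth bound on $\{w_\omega\}$ together with $(\mathcal A_4)$ along a convergent subsequence for the reverse inequality and the identification of accumulation points as minimizers.
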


\begin{proof}
We have:
\begin{equation}
t^*\stackrel{\eqref{tilde_t_bound3}}{=}\lim_{\omega\rightarrow+\infty}\tilde t_\omega\stackrel{\eqref{duality0}}{=}\lim_{\omega\rightarrow+\infty}\mathcal I_\omega(w_\omega)\stackrel{\eqref{duality0}}{=}\lim_{\omega\rightarrow+\infty}\min_{\substack{v\in\mathbb R^n\\ b^\top v=1}} \mathcal I_\omega(v)\stackrel{\eqref{I_omega+}}{\leq}\inf_{\substack{v\in\mathcal C\\ b^\top v=1}}\ \mathcal I_\infty(v)\stackrel{\eqref{LA_problem}}{=}t_\infty.
\label{tilde_t_bound4}
\end{equation}	
If $t^*=+\infty$ then also $t_\infty=+\infty$. Now, assume that $t^*<+\infty$. For any $\omega>0$,
$$c_1\|w_\omega\|-\frac{c_2}{\omega}\stackrel{\eqref{lin_growth}}\leq\frac{1}{\omega}\mathcal I(\omega w_\omega)=\mathcal I_{\omega}(w_\omega)=\tilde t_{\omega}\stackrel{\eqref{tilde_t_bound}}{\leq} t^*,$$
thus, the sequences $\{w_\omega\}_\omega$ and $\{\mathcal I_{\omega}(v_\omega)\}$ are bounded. Then there exists a subsequence $\{w_{\omega'}\}_{\omega'}$ of $\{w_\omega\}_\omega$ converging to some $w_\infty\in\mathbb R^n$. Clearly, $b^\top w_\infty=1$. From $(\mathcal A_4)$, we have $w_\infty\in\mathcal C$ and $\liminf_{\omega\rightarrow+\infty}\mathcal I_{\omega'}(w_{\omega'})\geq\mathcal I_\infty(w_\infty)$.
Hence,
$$t_\infty\leq\mathcal I_{\infty}(w_{\infty})\leq\liminf_{\omega'\rightarrow+\infty} \mathcal I_{\omega'}(w_{\omega'})\stackrel{\eqref{tilde_t_bound4}}{=}t^*\stackrel{\eqref{tilde_t_bound4}}{\leq}t_\infty<+\infty,$$
and consequently $t^*=t_\infty$. In addition, $w_\infty$ is a minimizer in \eqref{LA_problem}. 
\end{proof}

\begin{theorem}
Let the assumptions $(\mathcal A_1)-(\mathcal A_5)$ be satisfied and $t^*<+\infty$. Then the function $\mathcal J_{t^*}$ is bounded from below.
\label{theorem_T*_bound}
\end{theorem}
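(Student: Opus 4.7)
My plan combines Theorem~\ref{theorem_LA} and assumption $(\mathcal A_5)$ with the lower semicontinuity of the Fenchel conjugate of $\mathcal I$. Since $t^*<+\infty$, Theorem~\ref{theorem_LA} supplies $w_\infty\in\mathcal C$ with $b^\top w_\infty=1$ and $\mathcal I_\infty(w_\infty)=t^*$; applying $(\mathcal A_5)$ at $v=w_\infty$ produces a constant $c:=c_{w_\infty}>0$ satisfying
\[
\omega t^*-c \;\leq\; \mathcal I(\omega w_\infty) \;\leq\; \omega t^* \qquad \forall\,\omega\geq 0,
\]
which, using $b^\top(\omega w_\infty)=\omega$, rewrites as the ray bound $\mathcal J_{t^*}(\omega w_\infty)\geq -c$ for every $\omega\geq 0$. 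Thus $\mathcal J_{t^*}$ is bounded below along the $w_\infty$-ray by a finite constant depending only on $w_\infty$.

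To propagate this one-dimensional bound to all of $\mathbb R^n$, I would pass to the Fenchel conjugate $\mathcal I^*(p):=\sup_v[p^\top v-\mathcal I(v)]$. Since $\mathcal I$ is convex and continuous, $\mathcal I^*$ is convex and lower semicontinuous, and the claim $\inf_v\mathcal J_{t^*}(v)>-\infty$ is equivalent to $\mathcal I^*(t^*b)<+\infty$. From Theorem~\ref{theorem_omega}, the sequence $(u_\omega,t_\omega)$ with $F(u_\omega)=t_\omega b$, $b^\top u_\omega=\omega$, and $t_\omega\nearrow t^*$ attains the supremum defining $\mathcal I^*(t_\omega b)$, so
\[
\mathcal I^*(t_\omega b)=t_\omega b^\top u_\omega-\mathcal I(u_\omega)=\omega(t_\omega-\tilde t_\omega).
\]
Lower semicontinuity of $\mathcal I^*$ then gives $\mathcal I^*(t^*b)\leq \liminf_{\omega\to+\infty}\mathcal I^*(t_\omega b)$, so it suffices to show that $\omega(t_\omega-\tilde t_\omega)$ remains bounded as $\omega\to+\infty$.

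The main obstacle is this last uniform bound. My plan is to use that $\omega w_\infty$ is feasible in \eqref{I_omega_min} (whence $\tilde t_\omega\leq \mathcal I(\omega w_\infty)/\omega\in[t^*-c/\omega,t^*]$ by the scalar bound above) together with the convexity inequality $\mathcal I(\omega w_\infty)\geq \mathcal I(u_\omega)+t_\omega b^\top(\omega w_\infty-u_\omega)=\mathcal I(u_\omega)$, which sandwiches $\mathcal I(u_\omega)$ between quantities close to $\omega t^*$. Combining this with control over the perpendicular displacement $u_\omega-\omega w_\infty$ --- available via the accumulation property $u_\omega/\omega\to w_\infty$ from Theorem~\ref{theorem_LA} together with $(\mathcal A_4)$, which forces any limit point of $w_\omega$ to lie in $\mathcal C$ with the right asymptotic value --- should give the required uniform bound on $\omega(t_\omega-\tilde t_\omega)$, and hence on $\mathcal I^*(t^*b)$, concluding $\mathcal J_{t^*}$ is bounded below. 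I expect the delicate point to be quantifying the transverse deviation $u_\omega-\omega w_\infty$ finely enough: the argument must rule out the scenario in which $\tilde t_\omega$ approaches $t^*$ strictly slower than the rate $c/\omega$ coming from $(\mathcal A_5)$ on the $w_\infty$-ray, which is exactly what $(\mathcal A_4)$ is designed to prevent.
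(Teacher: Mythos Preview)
Your first paragraph is correct and contains the essential computation; the problem is what you do afterwards. The Fenchel-conjugate route leaves you needing a uniform bound on $\omega(t_\omega-\tilde t_\omega)$, and the tools you invoke cannot deliver it. Assumption $(\mathcal A_4)$ is a qualitative lower-semicontinuity condition: it gives $\liminf \mathcal I_\omega(w_\omega)\geq\mathcal I_\infty(w_\infty)$, hence $\tilde t_\omega\to t^*$, but no rate. Assumption $(\mathcal A_5)$ gives a constant $c_v$ \emph{depending on} $v\in\mathcal C$; applying it to the varying $w_\omega$ (even granting $w_\omega\in\mathcal C$, which you do not have) yields $\omega(t_\omega-\tilde t_\omega)\le c_{w_\omega}$ with no uniform control on $c_{w_\omega}$. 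Likewise, Theorem~\ref{theorem_LA} gives only $w_\omega\to w_\infty$ along subsequences, i.e.\ $u_\omega-\omega w_\infty=o(\omega)$, whereas you would need this difference bounded. So the ``delicate point'' you flag is a genuine gap, and your claim that ``$(\mathcal A_4)$ is designed to prevent'' the slow-convergence scenario is incorrect.

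The paper's argument sidesteps the uniformity issue entirely by working by contradiction. If $\mathcal J_{t^*}$ were unbounded below, convexity produces a single direction $v_0$ (normalized to $b^\top v_0=1$) with $\mathcal J_{t^*}(nv_0)\to-\infty$. This forces $\mathcal I_\infty(v_0)\leq t^*$; combined with $t^*=t_\infty\leq\mathcal I_\infty(v_0)$ from Theorem~\ref{theorem_LA}, one gets $\mathcal I_\infty(v_0)=t^*$ and $v_0\in\mathcal C$. Now apply $(\mathcal A_5)$ at \emph{this fixed} $v_0$: $\mathcal J_{t^*}(nv_0)=\mathcal I(nv_0)-nt^*\geq n\mathcal I_\infty(v_0)-c_{v_0}-nt^*=-c_{v_0}$, a contradiction. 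This is exactly the ray computation you carried out for $w_\infty$; the missing insight is that you need not propagate the bound from one fixed ray to all of $\mathbb R^n$ --- any hypothetical direction of unboundedness is automatically an LA minimizer, so your own first-paragraph estimate already handles it.
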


\begin{proof}
Suppose that $\mathcal J_{t^*}$ is unbounded from below. Since this function is convex, there exists $v_0\in\mathbb R^n$ such that
\begin{equation}
\lim_{n\rightarrow+\infty}\mathcal J_{t^*}(nv_0)=\lim_{n\rightarrow+\infty}[\mathcal I(nv_0)-t^*b^\top(nv_0)]=-\infty.
\label{contradiction}
\end{equation}
This is possible only if $b^\top v_0>0$. Without loss of generality, one can assume that $b^\top v_0=1$. Then
$$\lim_{n\rightarrow+\infty}n\left[\frac{1}{n}\mathcal I(nv_0)-t^*\right]=-\infty.$$
From \eqref{I_infty}, we obtain $\mathcal I_{\infty}(v_0)\leq t^*$, i.e., $v_0\in\mathcal C$. By \eqref{LA_problem} and \eqref{LA}, we have $\mathcal I_{\infty}(v_0)\geq t_\infty= t^*$, i.e.,  $\mathcal I_{\infty}(v_0)= t^*$. Hence,
$$\mathcal J_{t^*}(nv_0)=\mathcal I(nv_0)-nt^*b^\top v_0\stackrel{(\mathcal A_5)}{\geq}n\mathcal I_\infty(v_0)-c_{v_0}-nt^*=-c_{v_0},$$
for any $n\in\mathbb N$, which is a contradiction with \eqref{contradiction}.
\end{proof}

\begin{remark}
\emph{If $(\mathcal A_1)-(\mathcal A_5)$ are satisfied then it is also possible to show that the set $\mathcal C=\{v\in\mathbb R^n\ |\;\; \mathcal I_\infty(v)<+\infty\}$ is closed. Then, continuity of $\mathcal I_\infty$ can be extended to the whole cone $\mathcal C$. In addition, $t^*<+\infty$ if and only if the set $\{v\in\mathcal C\ |\; b^\top v=1\}$ is nonempty. For the sake of brevity, we skip a proof of these statements.}
\end{remark}

\begin{remark}(EP interpretation.)
\emph{In EP, the function $\mathcal I_\infty$ represents the plastic dissipation, which is independent of the elastic part of the model. The shape of the set $\mathcal C$ depends mainly on a chosen plastic criterion. For the von Mises and Tresca criteria, $\mathcal C$ is a subspace of $\mathbb R^n$. For the Drucker-Prager and Mohr-Coulomb criteria, $\mathcal C$ is a  nontrivial cone. The case $\mathcal C=\mathbb R^n$ occurs for criteria leading to bounded yield surfaces. The LA problem \eqref{LA_problem} corresponds to the so-called \textit{upper bound limit analysis theorem}, see \cite{T85, Ch96, Sl13}. It means that the value $\mathcal I_\infty(v)$ is an upper bound of $t^*$ for any $v\in \mathcal C$, $b^\top v=1$.}
\end{remark}

\begin{example}
\emph{We illustrate the previous theoretical results with the following three examples in $\mathbb R$ (i.e., $n=1$), where one can easily verify the validity of $(\mathcal A_1)-(\mathcal A_5)$. In all cases, we set $b=1$. 
More challenging examples are mentioned in Section \ref{subsec_examples_2D}.
\begin{enumerate}
	\item Let $\mathcal I(v)=\frac{1}{2}v^2$, $v\in\mathbb R$. Then $u_t=t$ is the unique minimizer of the function $\mathcal J_t(v)=\mathcal I(v)-tv$ for any $t\in\mathbb R$ and thus $t^*=+\infty$. It is easy to see that $\mathcal I_\infty(v)=+\infty$ for any $v\in\mathbb R$, $v\neq0$. Therefore, $\mathcal C=\{0\}$ and $t_\infty=+\infty$. 
	\item Let $\mathcal I(v)=\frac{1}{2}v^2$ if $|v|\leq1$ and $\mathcal I(v)=|v|-\frac{1}{2}$ if $|v|\geq1$. Then  $u_t=t$ is the unique minimizer of the function $\mathcal J_t(v)=\mathcal I(v)-tv$ for $t\in[0,1)$. If $t=1$ then the solution set $\mathcal K_t=[1,+\infty)$ is unbounded and, thus, $t^*=1$. For $t>1$, no solution exists. Further, we have $\mathcal I_\infty(v)=|v|$ for any $v\in\mathbb R$. Therefore, $\mathcal C=\mathbb R$ and the LA problem has a unique minimum $v=1$, in other words, $t_\infty=\mathcal I_\infty(1)=1$.
	\item Let $\mathcal I(v)=e^{-v}+v-1$, $v\in\mathbb R$. Then, there exists a unique minimum of the function $\mathcal J_t(v)=\mathcal I(v)-tv$ for $t\in[0,1)$. For $t=t^*=1$, no solution exists but the function $\mathcal J_{t^*}$ is bounded from below. Further, $\mathcal I_\infty(v)=+\infty$ if $v<0$, $\mathcal I_\infty(v)=v$ if $v\geq0$ and $\mathcal C=[0,+\infty)$. Hence, $t_\infty=\mathcal I_\infty(1)=1$.
\end{enumerate}}
\end{example}

\begin{example}
\emph{Let $n=1$, $b=1$, $\mathcal I(v)=v-\ln v-2$ if $v\geq 1$ and $\mathcal I(v)=v^2-2v$ if $v\leq1$. Then, it is possible to show that $(\mathcal A_1)-(\mathcal A_4)$ hold but $(\mathcal A_5)$ does not hold. Consequently, $t^*=t_\infty=1$, but $\mathcal J_{t^*}$ is unbounded from below. Therefore, the assumption $(\mathcal A_5)$ is not purposeless.}
\end{example}

\subsection{On solvability of convex optimization problems via LA}
\label{subsec_solvability}

The aim of this subsection is to sketch that the LA approach could lead to innovative solvability conditions for convex optimization problems, not only the ones obeying the assumptions $(\mathcal A_1)-(\mathcal A_5)$.

For the original minimization problem \eqref{J_min} introduced in Section \ref{sec_algebra}, we have the following consequence of Theorems \ref{theorem_K}, \ref{theorem_LA} and \ref{theorem_T*_bound}.

\begin{theorem}
Let the assumptions $(\mathcal A_1)$ -- $(\mathcal A_5)$ be satisfied, $\mathcal K$ be the set of minimizers of $\mathcal J$ defined by \eqref{J_min}, and $t_\infty(=t^*)$ be defined by \eqref{LA_problem}. Then the following statements hold:
\begin{itemize}
    \item $t_\infty>1$ if and only if $\mathcal K$ is bounded and nonempty,
    \item if $t_\infty=1$ then $\mathcal K$ is either empty or unbounded and $\mathcal J$ is bounded from below, 
    \item if $t_\infty<1$ then $\mathcal K$ is empty and $\mathcal J$ is unbounded from below. 
\end{itemize}
\label{theorem_existence}
\end{theorem}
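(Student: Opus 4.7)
The plan is to reduce the statement to the parametric framework of Section \ref{sec_LA} by noting that $\mathcal K=\mathcal K_1$ and $\mathcal J=\mathcal J_1$, and then using the identification $t^*=t_\infty$ furnished by Theorem \ref{theorem_LA}. With this in hand each of the three alternatives corresponds to comparing $t^*$ with the value $1$, so the conclusions follow almost entirely from Theorems \ref{theorem_K} and \ref{theorem_T*_bound}. I would split the argument into the three cases listed.

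For the first bullet, if $t_\infty>1$ then $1\in[0,t^*)$, so Theorem \ref{theorem_K} immediately yields $\mathcal K_1$ nonempty and bounded. Conversely, suppose $\mathcal K=\mathcal K_1$ is nonempty and bounded. Lemma \ref{lemma_K_t} applied to $\mathcal J_1$ produces constants $c_1^*>0$, $c_2^*\geq 0$ with $\mathcal J_1(v)\geq c_1^*\|v\|-c_2^*$, and then for any $\epsilon\in(0,c_1^*/\|b\|)$ one has $\mathcal J_{1+\epsilon}(v)=\mathcal J_1(v)-\epsilon b^\top v\geq (c_1^*-\epsilon\|b\|)\|v\|-c_2^*$. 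Lemma \ref{lemma_K_t} then gives $\mathcal K_{1+\epsilon}\neq\emptyset$, so $t^*\geq 1+\epsilon>1$, i.e., $t_\infty>1$. (This is the same extension trick that concludes the proof of Theorem \ref{theorem_K}.) For the second bullet, $t_\infty=1$ means $t^*=1<+\infty$; Theorem \ref{theorem_K} then says $\mathcal K=\mathcal K_{t^*}$ is either empty or unbounded, while Theorem \ref{theorem_T*_bound} gives boundedness from below of $\mathcal J=\mathcal J_{t^*}$. For the third bullet, $t_\infty<1$ means $t^*<1$, and the characterization \eqref{limit_load2} forces $\inf_v\mathcal J_1(v)=-\infty$, for otherwise $1$ would lie in $\{t\geq 0\,|\,\inf_v\mathcal J_t(v)>-\infty\}$ and its supremum $t^*$ would be at least $1$. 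Since a function unbounded below has no minimizer, $\mathcal K=\emptyset$ as well.

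The only mildly delicate point is the converse direction in the first case: one must show that a bounded nonempty $\mathcal K$ forces the strict inequality $t^*>1$ rather than only $t^*\geq 1$. This is precisely where the linear-growth lower bound supplied by Lemma \ref{lemma_K_t}, rather than mere coercivity, becomes essential, since it leaves enough slack to absorb a small perturbation $\epsilon b^\top v$ of the linear term and still guarantee coercivity of $\mathcal J_{1+\epsilon}$.
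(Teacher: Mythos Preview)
Your proof is correct and matches the paper's approach: the paper presents this theorem without an explicit proof, simply stating it as a consequence of Theorems \ref{theorem_K}, \ref{theorem_LA} and \ref{theorem_T*_bound}, and you have accurately filled in the details by identifying $\mathcal K=\mathcal K_1$, $\mathcal J=\mathcal J_1$ and applying those results case by case. The only additional observation is that the converse in the first bullet could be argued even more directly from Theorem \ref{theorem_K} itself (since $\mathcal K_{t^*}$ is either empty or unbounded while $\mathcal K_1$ is nonempty and bounded, one must have $1<t^*$), but your Lemma \ref{lemma_K_t} route is equally valid and indeed mirrors the extension step inside the proof of Theorem \ref{theorem_K}.
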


The following examples demonstrate that some of these results could be extended to more general optimization problems than the ones satisfying basic assumption $(\mathcal A_1)$ -- $(\mathcal A_3)$. Such extensions are out of the scope of this paper but they could be an inspiration for researchers.

\begin{example}
    \emph{Consider a linear system of equations $Au=b$, where $A$ is a symmetric, positive semidefinite matrix. It is well-known that this system has a solution if and only if $b\perp\mathrm{Ker}\,A$ (or equivalently, $b\in\mathrm{Im}\,A$). We show that this solvability condition is a consequence of Theorem \ref{theorem_existence} despite the fact that the assumption $(\mathcal A_3)$ does not hold, in general. In particular, we have:
    	$$\mathcal I(v)=\frac{1}{2}v^\top Av,\quad \mathcal I_{\infty}(v)=\lim\limits_{\omega\rightarrow+\infty}\frac{1}{\omega}\mathcal I(\omega v)=\left\{\begin{array}{rl}
		0, &v\in\mathrm{Ker}\,A,\\
		+\infty, & v\not\in\mathrm{Ker}\,A,
	\end{array}\right.,\quad\mathcal C=\mathrm{Ker}\,A,$$
$$t_\infty=\inf_{\substack{v\in\mathrm{Ker}\,A\\ b^\top v=1}}\mathcal I_{\infty}(v)=\left\{
\begin{array}{rl}
0, & b\not\perp\mathrm{Ker}\,A,\\
+\infty, & b\perp\mathrm{Ker}\,A.
\end{array}
\right.$$
Hence, $t_\infty>1$ if and only if $b\perp\mathrm{Ker}\,A$.}
\end{example}

\begin{example}
\emph{Let $n=1$, $b=1$ and $\mathcal I(v)=|v|$ if $v\leq1$, $\mathcal I(v)=2|v|-1$ if $v\geq1$. The function $\mathcal I$ is convex, coercive, satisfies $(\mathcal A_3)-(\mathcal A_5)$, but it is not smooth. Therefore, the original minimization problem \eqref{J_min} cannot be rewritten as the system \eqref{alg_system}. Despite this fact, one can easily verify the following results: $\mathcal K_t=\{0\}$ for any $t\in[0,1)$, $\mathcal K:=\mathcal K_1=[0,1]$, $\mathcal K_t=\{1\}$ for any $t\in(1,2)$, and $\mathcal K_t=[1,\infty)$ for $t=t^*=2$. It means that the problem \eqref{J_min} has the solution set $\mathcal K=[0,1]$, which is bounded and nonempty. Further, $\mathcal I_\infty(v)=-v$ if $v\leq0$, $\mathcal I_\infty(v)=2v$ if $v\geq0$, $\mathcal C=\mathbb R$, and $t_\infty=\mathcal I_{\infty}(1)=2>1$, i.e., the solvability condition from Theorem \ref{theorem_existence} is satisfied. 
Finally, the mapping $\psi:\omega\mapsto t_{\omega}$ defined by Theorem \ref{theorem_omega} is multivalued in this case (e.g., $\psi(1)=[1,2]$). It is caused by the fact that Lemma \ref{lemma_bu} does not hold for nonsmooth functions.}
\end{example}

Finally, it is worth mentioning that the LA analysis problem is meaningful even for constrained optimization problems as it is shown in \cite{SHHC15, CHKS15} for a contact problem of EP bodies.


\section{Parametrization of the function $F$}
\label{sec_SSRM}

In this section, we parametrize the system \eqref{alg_system} such that the function $F$ is replaced by a family of functions $F_\lambda$ depending on a scalar parameter $\lambda$. It is another way how to determine FoS related to the original system \eqref{alg_system}. In Section \ref{subsec_SSRM_def}, we define the parametrized problem and FoS, propose basic assumptions and generalize Theorem \ref{theorem_K}. In Section \ref{subsec_continuation2}, we present a similar continuation method for finding FoS as in Section \ref{sec_LA}. In Section \ref{subsec_LA4SSRM}, we use the LA approach for the determination of FoS. In Section \ref{subsec_examples_2D}, we illustrate theoretical results with examples having solutions in closed forms. 

\subsection{Definition, assumptions and analysis}
\label{subsec_SSRM_def}

Let $\lambda_0\in[0,1]$ be given. For any scalar factor $\lambda\geq\lambda_0$, consider a vector-valued function $F_\lambda\colon\mathbb R^n\rightarrow\mathbb R$ and assume that the original function $F$ corresponds to $\lambda=1$. The parametrized problem and FoS are defined as follows:
\begin{equation}
	\mbox{given $\lambda\geq \lambda_0,\;$ find } u_\lambda\in\mathbb R^n: \quad F_\lambda(u_\lambda)=b,
	\label{system_lambda}
\end{equation}
\begin{equation}
	\lambda^*\;:=\;\mbox{{supremum} of } \; \bar \lambda\geq \lambda_0\;\;\mbox{such that $\mathcal K_\lambda$ is nonempty for any $\lambda\in[\lambda_0,\bar \lambda]$,}
	\label{FoS_SSRM}
\end{equation}
where $\mathcal K_\lambda$ denotes the solution set to \eqref{system_lambda}. The definition admits the case $\lambda^*=+\infty$. Let us note that the parametrization of $b$ from Section \ref{sec_LA} is a special case of the $\lambda$-parametrization because it corresponds to the choice $F_\lambda:=F/\lambda$. Therefore, the $\lambda$-parametrization is more involved and thus more complex assumptions have to be proposed. All the assumptions introduced below can be verified for $F_\lambda:=F/\lambda$ if $(\mathcal A_1)-(\mathcal A_5)$ hold.


We start with the following basic assumptions:
\begin{itemize}
	\item[$(\mathcal B_1)_\lambda$] $b\neq0$, $F_\lambda(0)=0$ and $F_\lambda$ is continuous in $\mathbb R^n$ for any $\lambda\geq \lambda_0$.
	\item[$(\mathcal B_2)_\lambda$] $F_\lambda$ has a convex potential for any $\lambda\geq \lambda_0$, i.e., there exists a convex and continuously differentiable function $\mathcal I_\lambda\colon\mathbb R^n\rightarrow \mathbb R$ such that $\nabla\mathcal I_\lambda(v)=F_\lambda(v)$ and $\mathcal I_\lambda(0)=0$.
    \item[$(\mathcal B_3)_\lambda$] (Coercivity of $\{\mathcal J_\lambda\}_{\lambda\geq\lambda_0}$.)
\begin{itemize}
    \item[(i)] The function $\mathcal J_{\lambda_0}$ is coercive in $\mathbb R^n$.
    \item[(ii)] If $\mathcal J_{\bar\lambda}$ is bounded from below in $\mathbb R^n$ for some $\bar\lambda>\lambda_0$, then $\mathcal J_{\lambda}$ is coercive in $\mathbb R^n$ for any $\lambda\in[\lambda_0,\bar\lambda)$.
    \item[(iii)] If $\mathcal J_{\bar\lambda}$ is coercive in $\mathbb R^n$ for some $\bar\lambda>\lambda_0$ then there exists $\epsilon>0$ such that $\mathcal J_{\bar\lambda+\epsilon}$ is also coercive.
\end{itemize}
\item[$(\mathcal B_3)^+_\lambda$] The assumption $(\mathcal B_3)_\lambda$ holds and, in addition, if $\mathcal J_{\bar\lambda}$ is unbounded from below in $\mathbb R^n$ for some $\bar\lambda>\lambda_0$ then there exists $\epsilon>0$ such that $\mathcal J_{\bar\lambda-\epsilon}$ is also unbounded from below.	
\end{itemize}
We see that $(\mathcal B_1)_\lambda$ and $(\mathcal B_2)_\lambda$ straightforwardly extend the assumptions $(\mathcal A_1)$ and $(\mathcal A_2)$ from Section \ref{sec_algebra}. Due to $(\mathcal B_2)_\lambda$, one can introduce the following minimization problem:
\begin{equation}
	\mathcal J_\lambda(u_\lambda)\leq \mathcal J_\lambda(v)\quad \text{for all } v\in \mathbb R^n,\quad \mathcal J_\lambda(v):=\mathcal I_\lambda(v)-b^\top v.
	\label{system_lambda2}
\end{equation}
The solution set to \eqref{system_lambda2} coincides with $\mathcal K_\lambda$ corresponding to \eqref{system_lambda}. 

The assumption $(\mathcal B_3)_\lambda$ is much more general than  $(\mathcal A_3)$ because it is related to the whole cost function $\mathcal J_\lambda$, not only to its part $\mathcal I_\lambda$. Therefore, $(\mathcal B_3)_\lambda$ depends on $b$, in general. Although it is possible to replace $(\mathcal B_3)_\lambda$ with more specific assumptions, which are independent of $b$, such a treatment would be insufficient for geotechnical practice as is mentioned in Remark \ref{remark_SSR} and illustrated with Example \ref{examp2}, see below. 

The following result extends Theorems \ref{theorem_K} and \ref{theorem_T*_bound}. 

\begin{theorem}
	Let $(\mathcal B_1)_\lambda$, $(\mathcal B_2)_\lambda$ and $(\mathcal B_3)_\lambda$ be satisfied. Then $\lambda^*>\lambda_0$, $\mathcal K_\lambda$ is nonempty and bounded for any $\lambda\in[\lambda_0,\lambda^*)$, $\mathcal K_{\lambda^*}$ is either unbounded or empty for $\lambda^*<+\infty$ and $\mathcal J_{\lambda}$ is unbounded from below for $\lambda>\lambda^*$. Additionally, if $(\mathcal B_3)^+_\lambda$ also holds and $\lambda^*<+\infty$ then $\mathcal J_{\lambda^*}$ is bounded from below.
	\label{theorem_K_lambda}
\end{theorem}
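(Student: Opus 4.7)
The plan is to mimic the proof of Theorem \ref{theorem_K}, with the three clauses of $(\mathcal B_3)_\lambda$ replacing the single linear-growth bound $(\mathcal A_3)$. The main workhorse is Lemma \ref{lemma_K_t}, which — although stated under $(\mathcal A_1)$--$(\mathcal A_3)$ — only uses convexity and continuity of the cost functional in its proof, so it applies verbatim to every convex continuous $\mathcal J_\lambda$ and supplies the equivalence between coercivity of $\mathcal J_\lambda$ and non-emptiness/boundedness of $\mathcal K_\lambda$. The three parts of $(\mathcal B_3)_\lambda$ are tailored to interlock with this equivalence.

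To show $\lambda^*>\lambda_0$, I would combine $(\mathcal B_3)_\lambda$(i) and (iii): coercivity of $\mathcal J_{\lambda_0}$ yields some $\epsilon>0$ with $\mathcal J_{\lambda_0+\epsilon}$ coercive, hence bounded from below, and then $(\mathcal B_3)_\lambda$(ii) with $\bar\lambda=\lambda_0+\epsilon$ propagates coercivity throughout $[\lambda_0,\lambda_0+\epsilon)$, so Lemma \ref{lemma_K_t} gives $\mathcal K_\lambda\neq\emptyset$ on the whole closed interval $[\lambda_0,\lambda_0+\epsilon]$. The same recipe covers a generic $\lambda\in[\lambda_0,\lambda^*)$: pick $\bar\lambda\in(\lambda,\lambda^*)$ with $\mathcal K_{\bar\lambda}\neq\emptyset$ (hence $\mathcal J_{\bar\lambda}$ bounded below), apply $(\mathcal B_3)_\lambda$(ii) to conclude that $\mathcal J_\lambda$ is coercive, and invoke the lemma. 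When $\lambda^*<+\infty$, both remaining claims proceed by contradiction. If $\mathcal K_{\lambda^*}$ were nonempty and bounded, the lemma would give coercivity of $\mathcal J_{\lambda^*}$, $(\mathcal B_3)_\lambda$(iii) would push coercivity to some $\lambda^*+\epsilon$, and $(\mathcal B_3)_\lambda$(ii) would fill the gap, producing $\mathcal K_\lambda\neq\emptyset$ on all of $[\lambda_0,\lambda^*+\epsilon]$ — contradicting the definition of $\lambda^*$. Similarly, if $\mathcal J_\lambda$ were bounded from below for some $\lambda>\lambda^*$, then $(\mathcal B_3)_\lambda$(ii) with $\bar\lambda=\lambda$ would make every $\mathcal J_\mu$ coercive, and hence $\mathcal K_\mu$ nonempty, for $\mu\in[\lambda_0,\lambda)$ — in particular for a whole segment past $\lambda^*$, again overshooting the supremum.

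For the additional statement under $(\mathcal B_3)^+_\lambda$ with $\lambda^*<+\infty$, suppose $\mathcal J_{\lambda^*}$ is unbounded from below; the new clause supplies $\epsilon>0$ with $\mathcal J_{\lambda^*-\epsilon}$ also unbounded from below, but $\lambda^*-\epsilon\in[\lambda_0,\lambda^*)$ (the domain convention forces $\lambda^*-\epsilon\geq\lambda_0$, and $\mathcal J_{\lambda_0}$ being coercive in any case prevents equality), so the already-proved second conclusion gives $\mathcal K_{\lambda^*-\epsilon}\neq\emptyset$ and $\mathcal J_{\lambda^*-\epsilon}$ attains a finite minimum — a contradiction. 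The main delicacy throughout is the bookkeeping forced by the supremum definition of $\lambda^*$: because the defining condition requires $\mathcal K_\lambda\neq\emptyset$ on the whole initial segment $[\lambda_0,\bar\lambda]$, one cannot argue pointwise, and clauses (ii) and (iii) of $(\mathcal B_3)_\lambda$ are designed precisely so that coercivity at a single right-endpoint can be propagated back over the entire interval — this interleaving is the only real subtlety in what is otherwise the same coercivity-equivalence argument used in Theorem \ref{theorem_K}.
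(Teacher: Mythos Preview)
Your proposal is correct and follows essentially the same approach as the paper's proof: both rely on Lemma \ref{lemma_K_t} for the equivalence between coercivity and nonempty boundedness of $\mathcal K_\lambda$, and then interleave clauses (i)--(iii) of $(\mathcal B_3)_\lambda$ (and the extra clause in $(\mathcal B_3)^+_\lambda$) exactly as you describe. Your treatment is in fact slightly more explicit than the paper's in two places --- the use of (ii) to fill the interval $[\lambda_0,\lambda_0+\epsilon]$ when proving $\lambda^*>\lambda_0$, and the contradiction argument for unboundedness of $\mathcal J_\lambda$ when $\lambda>\lambda^*$ --- but these are straightforward elaborations of the same reasoning.
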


\begin{proof}
This result is in fact a consequence of Lemma \ref{lemma_K_t}. In particular, $\lambda^*>\lambda_0$ follows from $(\mathcal B_3)_\lambda$ (i) and (iii). Let $\bar\lambda<\lambda^*$. Then $\mathcal K_{\bar\lambda}\neq\emptyset$ (as follows from the definition \eqref{FoS_SSRM} of $\lambda^*$) and thus $\mathcal J_{\bar\lambda}$ is bounded from below in $\mathbb R^n$. By, $(\mathcal B_3)_\lambda$ (ii), $\mathcal J_{\lambda}$ is coercive in $\mathbb R^n$ for any $\lambda\in[\lambda_0,\bar\lambda)$. Consequently, $\mathcal K_\lambda$ is nonempty and bounded for any $\lambda\in[\lambda_0,\lambda^*)$. If $\lambda^*<+\infty$ then $\mathcal K_{\lambda^*}$ cannot be nonempty and bounded as follows from $(\mathcal B_3)_\lambda$ (iii) and \eqref{FoS_SSRM}. In addition, if $(\mathcal B_3)^+_\lambda$ holds then $\mathcal J_{\lambda^*}$ cannot be unbounded from below because we would obtain a contradiction to \eqref{FoS_SSRM}.
\end{proof}

\begin{remark}
(EP interpretation.) \emph{The $\lambda$-parametrization is inspired by the SSR method mentioned in Section \ref{sec_intro}. In particular, the factor $\lambda$ is used to reduce selected material parameters. The key assumption $(\mathcal B_2)_\lambda$ corresponds to the optimization variant of the SSR methods suggested in \cite{Sysala_2021}.  $(\mathcal B_3)_\lambda$ and $(\mathcal B_3)^+_\lambda$ are expected within the SSR method on the basis of numerical observations, but they cannot be verified a priori, in general. Further, the vector $b$ usually represents in geotechnics external forces like the gravity, which cause compressive stresses in rock or soil massifs. The SSR method was proposed just for such external forces and need not be meaningful for arbitrary $b$ unlike the LL and LA methods. This fact motivated us to formulate $(\mathcal B_3)_\lambda$ and $(\mathcal B_3)^+_\lambda$ such that the influence of $b$ is taken into account. 
}
\label{remark_SSR}
\end{remark}

\subsection{Indirect continuation technique}
\label{subsec_continuation2}

We introduce and analyze a similar continuation method as in Section \ref{sec_LA}. Besides $(\mathcal B_1)_\lambda-(\mathcal B_3)_\lambda$, the following additional assumptions are needed:
\begin{itemize}
\item[$(\mathcal B_4)_\lambda$] For any $\lambda\geq\lambda_0$, there exist constants $c_{1,\lambda}>0$ and $c_{2,\lambda}\geq0$ such that the following estimate holds:
	\begin{equation}
		\mathcal I_{\lambda}(v)\geq c_{1,\lambda}\|v\|-c_{2,\lambda}\quad\text{for all } v\in\mathbb R^n.
		\label{lin_growth_lambda}
	\end{equation}
	\item[$(\mathcal B_5)_\lambda$] For any $v\in\mathbb R^n$, the function $\lambda\mapsto\mathcal I_{\lambda}(v)$ is nonincreasing, i.e.,
	\begin{equation}
\mathcal I_{\tilde\lambda}(v)\geq \mathcal I_\lambda(v)\quad\mbox{for all } \lambda,\tilde\lambda\geq\lambda_0,\; \tilde\lambda\leq\lambda.
\label{I_lambda}
	\end{equation}
\item[$(\mathcal B_6)_\lambda$] For any sequences $\{\lambda_n\}\geq\lambda_0$ and $\{v_n\}\subset\mathbb R^n$ such that $\lambda_n\rightarrow\lambda$ and $v_n\rightarrow v$ as $n\rightarrow+\infty$, we have
	\begin{equation}
		\lim_{n\rightarrow+\infty}\mathcal I_{\lambda_n}(v_n)=\mathcal I_{\lambda}(v).
		\label{I_lambda_n}
	\end{equation}
	\item[$(\mathcal B_7)_\lambda$] For any $\lambda\geq\lambda_0$ there exists $v_\lambda\in\mathbb R^n$ such that
	\begin{equation}
		\lim_{\lambda\rightarrow+\infty}\mathcal J_{\lambda}(v_\lambda)=-\infty.
		\label{v_lambda}
	\end{equation}
	\item[$(\mathcal B_8)_\lambda$] Let $\lambda_1<\lambda_2$ and $\mathcal K_{\lambda_i}\neq\emptyset$, $i=1,2$. Then  $b^\top u_1\leq b^\top u_2$ for any $u_1\in\mathcal K_{\lambda_1}$ and $u_2\in\mathcal K_{\lambda_2}$. 
	\item[$(\mathcal B_8)^+_\lambda$] Let $\lambda_1<\lambda_2$ and $\mathcal K_{\lambda_i}\neq\emptyset$, $i=1,2$. Then  $b^\top u_1< b^\top u_2$ for any $u_1\in\mathcal K_{\lambda_1}$ and $u_2\in\mathcal K_{\lambda_2}$. 
\end{itemize} 

\begin{remark} (EP interpretation.)
\emph{The assumption $(\mathcal B_4)_\lambda-(\mathcal B_6)_\lambda$ are natural and can be verified a priori within the SSR method.  $(\mathcal B_7)_\lambda$ is trivially satisfied if $\lambda^*<+\infty$ and the case $\lambda^*=+\infty$ is not usual in geotechnical practice. Further, $(\mathcal B_8)_\lambda$ and $(\mathcal B_8)^+_\lambda$ are expected if $\lambda_0\in(0,1]$ is chosen sufficiently large as in Section \ref{sec_num_example}. However, the problem $(\mathcal P)_\lambda$ may have a purely elastic solution for too small values of $\lambda$ and this solution is independent of $\lambda$. Therefore, $(\mathcal B_8)^+_\lambda$ does not hold for too small $\lambda_0$.}
\end{remark}

The following lemmas enable us to analyze the mapping $\lambda\mapsto b^\top u_\lambda$, where $u_\lambda\in\mathcal K_\lambda$. 
\begin{lemma}
Let $(\mathcal B_1)_\lambda-(\mathcal B_7)_\lambda$ be satisfied. Then 
\begin{equation}
	\sup\Big\{b^\top u_\lambda\ |\;\;u_\lambda\in\bigcup_{\lambda\geq\lambda_0}\mathcal K_\lambda\Big\}=+\infty.
	\label{bu_lim}
\end{equation}
\label{lemma_omega_infty}
\end{lemma}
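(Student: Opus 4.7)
The plan is to argue by contradiction: assume $M := \sup\{b^\top u : u \in \bigcup_{\lambda \geq \lambda_0}\mathcal K_\lambda\}$ is finite, and split the analysis according to whether $\lambda^* = +\infty$ or $\lambda^* < +\infty$. The easy case is $\lambda^* = +\infty$; then every $\mathcal K_\lambda$ is nonempty, so I pick $u_\lambda \in \mathcal K_\lambda$ and the vector $v_\lambda$ supplied by $(\mathcal B_7)_\lambda$ yields $\mathcal J_\lambda(u_\lambda) \leq \mathcal J_\lambda(v_\lambda) \to -\infty$ as $\lambda \to +\infty$. Because $F_\lambda(0) = \nabla\mathcal I_\lambda(0) = 0$ combined with convexity and $\mathcal I_\lambda(0) = 0$ forces $\mathcal I_\lambda \geq 0$, I obtain $b^\top u_\lambda \geq -\mathcal J_\lambda(u_\lambda) \to +\infty$, contradicting the bound $M$.

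For the substantive case $\lambda^* < +\infty$, the first step is to extract uniform boundedness of $\|u_\lambda\|$ from the contradiction assumption. For any $u_\lambda \in \mathcal K_\lambda$ with $\lambda \in [\lambda_0,\lambda^*)$, the inequality $\mathcal J_\lambda(u_\lambda) \leq \mathcal J_\lambda(0) = 0$ gives $\mathcal I_\lambda(u_\lambda) \leq b^\top u_\lambda \leq M$; monotonicity $(\mathcal B_5)_\lambda$ and the linear growth $(\mathcal B_4)_\lambda$ evaluated at $\lambda^*$ then yield
\begin{equation*}
c_{1,\lambda^*}\|u_\lambda\| - c_{2,\lambda^*} \,\leq\, \mathcal I_{\lambda^*}(u_\lambda) \,\leq\, \mathcal I_\lambda(u_\lambda) \,\leq\, M.
\end{equation*}
Choosing any $\lambda_n \nearrow \lambda^*$ and $u_n \in \mathcal K_{\lambda_n}$, I extract a convergent subsequence $u_n \to u^*$, and pass to the limit in $\mathcal J_{\lambda_n}(u_n) \leq \mathcal J_{\lambda_n}(v)$ using the joint continuity $(\mathcal B_6)_\lambda$ to conclude $u^* \in \mathcal K_{\lambda^*}$. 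Since $\mathcal K_{\lambda^*}$ is nonempty, Theorem \ref{theorem_K_lambda} forces it to be unbounded, and being closed and convex its recession cone contains a unit vector $d$ with $u^* + sd \in \mathcal K_{\lambda^*}$ for every $s \geq 0$.

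The main obstacle is then to show that this recession direction satisfies $b^\top d > 0$; once this is available, the contradiction is immediate because $b^\top(u^* + sd) = b^\top u^* + s\,b^\top d \to +\infty$ along a ray of minimizers, violating $b^\top u \leq M$ on $\mathcal K_{\lambda^*}$. To establish $b^\top d > 0$, I exploit the minimizer identity $\mathcal J_{\lambda^*}(u^* + sd) = \mathcal J_{\lambda^*}(u^*)$, which rearranges to $\mathcal I_{\lambda^*}(u^* + sd) = \mathcal I_{\lambda^*}(u^*) + s\,b^\top d$. Combining this with $\mathcal I_{\lambda^*}(u^* + sd) \geq c_{1,\lambda^*}(s - \|u^*\|) - c_{2,\lambda^*}$ from $(\mathcal B_4)_\lambda$, dividing by $s$ and letting $s \to +\infty$ yields $b^\top d \geq c_{1,\lambda^*} > 0$, which closes the argument.
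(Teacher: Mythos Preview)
Your proof is correct and uses the same key ingredients as the paper: the linear growth $(\mathcal B_4)_\lambda$ combined with monotonicity $(\mathcal B_5)_\lambda$ to bound $\|u_\lambda\|$, the joint continuity $(\mathcal B_6)_\lambda$ to pass to the limit, the recession-direction argument showing $b^\top d\geq c_{1,\lambda^*}>0$, and $(\mathcal B_7)_\lambda$ for the case $\lambda^*=+\infty$. The only difference is organizational: the paper treats the subcases $\mathcal K_{\lambda^*}$ unbounded and $\mathcal K_{\lambda^*}=\emptyset$ separately, whereas you wrap everything in a global contradiction and show that the boundedness hypothesis forces $\mathcal K_{\lambda^*}\neq\emptyset$ (hence unbounded by Theorem~\ref{theorem_K_lambda}), thereby merging the two subcases into one.
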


\begin{proof}
Let us remind that $\mathcal K_\lambda=\emptyset$ for any $\lambda\in[\lambda_0,\lambda^*)$. In account of this fact and Theorem \ref{theorem_K_lambda}, it suffices to distinguish the following three possible scenarios.

$(a)$ \textit{Let $\lambda^*<+\infty$ and $\mathcal K_{\lambda^*}$ be unbounded.} Since $\mathcal K_{\lambda^*}$ is also closed and convex, there exist $u^*\in\mathcal K_{\lambda^*}$ and $w^*\in\mathbb R^n$, $w^*\neq0$, such that $u^*+\beta w^*\in\mathcal K_{\lambda^*}$ for any $\beta\geq0$. From \eqref{system_lambda}, we have
$$\mathcal I_{\lambda^*}(u^*)-b^\top u^*=\mathcal I_{\lambda^*}(u^*+\beta w^*)-b^\top (u^*+\beta w^*)\quad\forall \beta\geq0.$$
Hence,
\begin{align*}
b^\top w^*&=\lim_{\beta\rightarrow+\infty}\frac{1}{\beta}[\mathcal I_{\lambda^*}(u^*+\beta w^*)-\mathcal I_{\lambda^*}(u^*)]\\
&\stackrel{(\mathcal B_4)_\lambda}{\geq}\lim_{\beta\rightarrow+\infty}\frac{1}{\beta}[c_{1,\lambda^*}\|u^*+\beta w^*\|-c_{2,\lambda^*}-\mathcal I_{\lambda^*}(u^*)]\geq c_{1,\lambda^*}\|w^*\|>0,
\end{align*}
so that $\lim_{\beta\rightarrow+\infty}b^\top (u^*+\beta w^*)=+\infty$ and thus \eqref{bu_lim} holds in this case.

$(b)$ \textit{Let $\lambda^*<+\infty$ and $\mathcal K_{\lambda^*}=\emptyset$.} Suppose that there exist sequences $\{\lambda_n\}$ and $\{u_n\}$ such that $\lambda_n<\lambda^*$, $\lambda_n\rightarrow\lambda^*$ as $n\rightarrow+\infty$, $u_n\in\mathcal K_{\lambda_n}$ and $\{b^\top u_n\}$ is bounded from above. Then
$$0\geq \mathcal J_{\lambda_n}(u_n)=\mathcal I_{\lambda_n}(u_n)-b^\top u_n\stackrel{(\mathcal B_5)_\lambda}{\geq}\mathcal I_{\lambda^*}(u_n)-b^\top u_n\stackrel{(\mathcal B_4)_\lambda}{\geq}c_{1,\lambda^*}\|u_n\|-c_{2,\lambda^*}-c,$$
where $c>0$ is an upper bound of $\{b^\top u_n\}$. Hence, the sequence $\{u_n\}$ is bounded and its accumulation points belong to $\mathcal K_{\lambda^*}$ which can be proven using $(\mathcal B_6)_\lambda$. This is a contradiction to the assumption $\mathcal K_{\lambda^*}=\emptyset$. Therefore, \eqref{bu_lim} holds again.

$(c)$ \textit{Let $\lambda^*=+\infty$.} Then for any sequence $\{u_\lambda\}$ of solutions from to $\mathcal K_\lambda$ and any sequence $\{v_\lambda\}$ satisfying $(\mathcal B_7)_\lambda$, we have:
$$-\lim_{\lambda\rightarrow+\infty}b^\top u_\lambda\leq\lim_{\lambda\rightarrow+\infty}\mathcal J_\lambda(u_\lambda)\leq \lim_{\lambda\rightarrow+\infty}\mathcal J_\lambda(v_\lambda)=-\infty.$$
Hence, $b^\top u_\lambda\rightarrow+\infty$ as $\lambda\rightarrow+\infty$, which completes the proof.
\end{proof}	

\begin{lemma}
Let $(\mathcal B_1)_\lambda-(\mathcal B_8)_\lambda$ be satisfied. Let $\lambda\in(\lambda_0,\lambda^*]\cap\mathbb R$ be such that $\mathcal K_\lambda\neq\emptyset$, $\{\lambda_n\}\subset[\lambda_0,\lambda)$ be an increasing sequence tending to $\lambda$ and $\{u_n\}$ be a sequence of solutions belonging to $\mathcal K_{\lambda_n}$. Then 
\begin{equation}
\lim_{n\rightarrow+\infty} b^\top u_n=\min_{u_\lambda\in\mathcal K_\lambda}b^\top u_\lambda.
\label{bu_lambda}
\end{equation}
\label{lemma_semicontinuity1}
\end{lemma}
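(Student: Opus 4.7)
The plan is to combine monotonicity along the sequence (from $(\mathcal B_8)_\lambda$) with a standard compactness/lower-semicontinuity argument to identify the limit with a minimum over $\mathcal K_\lambda$. First I would observe that $\{b^\top u_n\}$ is nondecreasing by $(\mathcal B_8)_\lambda$ (since $\lambda_n$ is increasing and $\mathcal K_{\lambda_n}\neq\emptyset$ by Theorem \ref{theorem_K_lambda}) and, again by $(\mathcal B_8)_\lambda$, bounded above by $b^\top u_\lambda$ for every $u_\lambda\in\mathcal K_\lambda$. Hence the limit $\ell:=\lim_{n\to+\infty}b^\top u_n$ exists and $\ell\leq m$, where $m:=\inf_{u_\lambda\in\mathcal K_\lambda}b^\top u_\lambda$. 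That this infimum is actually attained follows from the fact that any recession direction $w^*$ of $\mathcal K_\lambda$ satisfies $b^\top w^*\geq c_{1,\lambda}\|w^*\|>0$ by exactly the computation used in case $(a)$ of the proof of Lemma \ref{lemma_omega_infty}; this excludes both $m=-\infty$ and the failure of attainment.

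Next I would establish boundedness of $\{u_n\}$. Since $u_n$ minimizes $\mathcal J_{\lambda_n}$ and $\mathcal J_{\lambda_n}(0)=0$, one has $\mathcal I_{\lambda_n}(u_n)\leq b^\top u_n\leq m$. The monotonicity assumption $(\mathcal B_5)_\lambda$ then gives $\mathcal I_{\lambda}(u_n)\leq \mathcal I_{\lambda_n}(u_n)\leq m$ (because $\lambda_n\leq\lambda$), and the linear growth $(\mathcal B_4)_\lambda$ applied at $\lambda$ yields
\begin{equation*}
c_{1,\lambda}\|u_n\|-c_{2,\lambda}\leq\mathcal I_{\lambda}(u_n)\leq m,
\end{equation*}
so $\{u_n\}$ is bounded.

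Now I would pass to a convergent subsequence $u_{n_k}\to\bar u$. The continuity property $(\mathcal B_6)_\lambda$ applied to $(\lambda_{n_k},u_{n_k})\to(\lambda,\bar u)$ gives $\mathcal I_{\lambda_{n_k}}(u_{n_k})\to\mathcal I_\lambda(\bar u)$ and, for any fixed $v\in\mathbb R^n$, $\mathcal I_{\lambda_{n_k}}(v)\to\mathcal I_\lambda(v)$. Passing to the limit in $\mathcal J_{\lambda_{n_k}}(u_{n_k})\leq\mathcal J_{\lambda_{n_k}}(v)$ then yields $\mathcal J_\lambda(\bar u)\leq\mathcal J_\lambda(v)$ for every $v\in\mathbb R^n$, so $\bar u\in\mathcal K_\lambda$. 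Consequently $b^\top\bar u\geq m$, and since $b^\top u_{n_k}\to b^\top\bar u=\ell$, we conclude $\ell=m$. Because the full sequence $\{b^\top u_n\}$ already has a limit, the result holds for the full sequence, which is \eqref{bu_lambda}.

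The main subtlety I expect is verifying that the infimum defining $m$ is actually attained when $\mathcal K_\lambda$ may be unbounded (the case $\lambda=\lambda^*$ allows unboundedness by Theorem \ref{theorem_K_lambda}). This is precisely where the linear-growth assumption $(\mathcal B_4)_\lambda$ pays off through the recession-cone computation borrowed from the proof of Lemma \ref{lemma_omega_infty}(a); once this is in hand, the remainder is a routine minimizing-sequence argument using $(\mathcal B_5)_\lambda$ and $(\mathcal B_6)_\lambda$.
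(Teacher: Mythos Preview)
Your proof is correct and follows essentially the same route as the paper: bound $\{u_n\}$ by combining $(\mathcal B_4)_\lambda$, $(\mathcal B_5)_\lambda$, $(\mathcal B_8)_\lambda$ with $\mathcal J_{\lambda_n}(u_n)\leq 0$, then use $(\mathcal B_6)_\lambda$ to show that accumulation points lie in $\mathcal K_\lambda$ and identify the limit via $(\mathcal B_8)_\lambda$. Your separate recession-cone argument for attainment of the infimum is correct but unnecessary, since attainment falls out as a byproduct of the main argument (the accumulation point $\bar u\in\mathcal K_\lambda$ already realizes $b^\top\bar u=m$).
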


\begin{proof}
We have the following estimates:
\begin{align*}
0&\geq\mathcal J_{\lambda_n}(u_n)\stackrel{(\mathcal B_5)_\lambda}{\geq}\mathcal J_{\lambda}(u_n)=\mathcal I_{\lambda}(u_n)-b^\top u_n\stackrel{(\mathcal B_4)_\lambda,(\mathcal B_8)_\lambda}{\geq} c_{1,\lambda}\|u_n\|-c_{2,\lambda}-b^\top u_\lambda,
\end{align*}
where $u_\lambda\in\mathcal K_\lambda$. Hence, the sequence $\{u_n\}$ is bounded and its accumulation points belong to $\mathcal K_{\lambda}$ which can be proven using $(\mathcal B_6)_\lambda$. Let $\bar u_\lambda\in\mathcal K_\lambda$ denote an arbitrary accumulation point of $\{u_n\}$. Then, from the assumption $(\mathcal B_8)_\lambda$, it is easy to see that $b^\top \bar u_\lambda=\min_{u_\lambda\in\mathcal K_\lambda}b^\top u_\lambda$, which gives \eqref{bu_lambda}.
\end{proof}

\begin{lemma}
	Let $(\mathcal B_1)_\lambda-(\mathcal B_8)_\lambda$ be satisfied. Let $\lambda\in[\lambda_0,\lambda^*)$, $\{\lambda_n\}\subset(\lambda,\lambda^*)$ be a decreasing sequence tending to $\lambda$ and $\{u_n\}$ be a sequence of solutions belonging to $\mathcal K_{\lambda_n}$. Then 
	\begin{equation}
		\lim_{n\rightarrow+\infty} b^\top u_n=\max_{u_\lambda\in\mathcal K_\lambda}b^\top u_\lambda.
	\end{equation}
\label{lemma_semicontinuity2}
\end{lemma}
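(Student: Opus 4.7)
The plan is to mirror the proof of Lemma \ref{lemma_semicontinuity1}, but with the bounding argument rearranged to suit the approach-from-above setting. Instead of using an element of $\mathcal K_\lambda$, which via $(\mathcal B_8)_\lambda$ would only give a lower bound on $b^\top u_n$ when $\lambda<\lambda_n$, I will exploit the strict inclusion $\{\lambda_n\}\subset(\lambda,\lambda^*)$: the first term $\lambda_1$ satisfies $\lambda_1<\lambda^*$, so I can pick a reference parameter $\tilde\lambda\in(\lambda_1,\lambda^*)$ and some $u_{\tilde\lambda}\in\mathcal K_{\tilde\lambda}$, which is nonempty by Theorem \ref{theorem_K_lambda}.

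To prove boundedness of $\{u_n\}$, since $\lambda_n\leq\lambda_1<\tilde\lambda$, $(\mathcal B_8)_\lambda$ gives $b^\top u_n\leq b^\top u_{\tilde\lambda}$ for every $n$. From $\mathcal J_{\lambda_n}(u_n)\leq\mathcal J_{\lambda_n}(0)=0$, combined with the monotonicity $(\mathcal B_5)_\lambda$ which yields $\mathcal I_{\lambda_n}(u_n)\geq\mathcal I_{\tilde\lambda}(u_n)$, and the linear growth $(\mathcal B_4)_\lambda$ at $\tilde\lambda$, I will obtain
$$0\geq \mathcal I_{\lambda_n}(u_n)-b^\top u_n\geq c_{1,\tilde\lambda}\|u_n\|-c_{2,\tilde\lambda}-b^\top u_{\tilde\lambda},$$
which bounds $\|u_n\|$ uniformly in $n$.

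Next I will identify the accumulation points. Let $u_{n_k}\to\bar u$ along any convergent subsequence. Passing to the limit in $\mathcal J_{\lambda_{n_k}}(u_{n_k})\leq\mathcal J_{\lambda_{n_k}}(v)$ for an arbitrary $v\in\mathbb R^n$ and invoking the joint continuity $(\mathcal B_6)_\lambda$ on both sides yields $\mathcal J_\lambda(\bar u)\leq\mathcal J_\lambda(v)$, so $\bar u\in\mathcal K_\lambda$. For any $u_\lambda\in\mathcal K_\lambda$, applying $(\mathcal B_8)_\lambda$ with $\lambda<\lambda_{n_k}$ gives $b^\top u_\lambda\leq b^\top u_{n_k}\to b^\top\bar u$, so $\bar u$ attains the maximum of $v\mapsto b^\top v$ over the nonempty bounded closed convex set $\mathcal K_\lambda$ (bounded by Theorem \ref{theorem_K_lambda}). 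Since every accumulation point of $\{u_n\}$ shares this same $b^\top$-value, and $\{b^\top u_n\}$ is itself bounded, the full sequence $\{b^\top u_n\}$ converges to $\max_{u_\lambda\in\mathcal K_\lambda}b^\top u_\lambda$.

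The main subtlety, as compared to Lemma \ref{lemma_semicontinuity1}, is precisely this boundedness step: one cannot use a solution at $\lambda$ itself to bound $b^\top u_n$ from above, because the inequality in $(\mathcal B_8)_\lambda$ then points the wrong way. The essential ingredient is that the hypothesis $\lambda<\lambda^*$ leaves room strictly above $\lambda_1$ inside the solvable range, which is exactly what makes the approach-from-above analysis go through. I do not expect any further significant obstacle.
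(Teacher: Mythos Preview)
Your proof is correct and follows essentially the same route as the paper. The only difference is cosmetic: the paper uses $\lambda_1$ itself as the reference parameter (since $\lambda_n\leq\lambda_1<\lambda^*$ already, one has $b^\top u_n\leq b^\top u_1$ by $(\mathcal B_8)_\lambda$ and $\mathcal I_{\lambda_n}(u_n)\geq\mathcal I_{\lambda_1}(u_n)$ by $(\mathcal B_5)_\lambda$), so there is no need to introduce an auxiliary $\tilde\lambda\in(\lambda_1,\lambda^*)$.
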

\begin{proof}
Since $\lambda_n<\ldots<\lambda_2<\lambda_1<\lambda^*$, we have:
$$0\geq\mathcal J_{\lambda_n}(u_n)\stackrel{(\mathcal B_5)_\lambda}{\geq}\mathcal J_{\lambda_1}(u_n)=\mathcal I_{\lambda_1}(u_n)-b^\top u_n\stackrel{(\mathcal B_4)_\lambda,(\mathcal B_8)_\lambda}{\geq} c_{1,\lambda_1}\|u_n\|-c_{2,\lambda_1}-b^\top u_1.$$
Hence, the sequence $\{u_n\}$ is bounded. The rest of the proof is analogous to the previous one.
\end{proof}

Now, we formulate and prove the main result, which is analogous to Theorem \ref{theorem_omega}. We need to define the following value:
\begin{equation}
	\omega_0:=\min_{u\in\mathcal K_{\lambda_0}}b^\top u.
	\label{omega_0}
\end{equation}

\begin{theorem}
Let $(\mathcal B_1)_\lambda-(\mathcal B_8)_\lambda$ be satisfied. Then for any $\omega\geq\omega_0$ there exist  $\lambda_\omega\in[\lambda_0,\lambda^*]\cap\mathbb R$ and $u_\omega\in\mathbb R^n$ such that 
\begin{equation}
F_{\lambda_\omega}(u_\omega)=b,\quad b^\top u_\omega=\omega.
\label{system_omega}
\end{equation}
The mapping $\psi\colon\omega\mapsto\lambda_\omega$ (generally multi-valued) is nondecreasing and $\psi(\omega)\rightarrow\lambda^*$ as $\omega\rightarrow+\infty$. If $(\mathcal B_8)^+_\lambda$ is satisfied then $\psi$ is a single-valued and continuous function.
\label{theorem_omega_SSRM}
\end{theorem}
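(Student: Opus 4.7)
The overall strategy is a monotone intermediate-value argument on $\lambda\mapsto b^\top u$ with $u\in\mathcal{K}_\lambda$. For every $\lambda\in[\lambda_0,\lambda^*]$ with $\mathcal{K}_\lambda\neq\emptyset$, set $\beta(\lambda):=\min_{u\in\mathcal{K}_\lambda}b^\top u$ and $\alpha(\lambda):=\sup_{u\in\mathcal{K}_\lambda}b^\top u$, which are finite for $\lambda\in[\lambda_0,\lambda^*)$ by Theorem \ref{theorem_K_lambda}. Since $\mathcal{K}_\lambda$ is closed and convex, the set $\{b^\top u:u\in\mathcal{K}_\lambda\}$ coincides with $[\beta(\lambda),\alpha(\lambda)]$ (or $[\beta(\lambda^*),+\infty)$ when $\mathcal{K}_{\lambda^*}$ is unbounded). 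Assumption $(\mathcal B_8)_\lambda$ forces $\alpha(\lambda_1)\leq\beta(\lambda_2)$ whenever $\lambda_1<\lambda_2$ and both sets are nonempty, so $\alpha$ and $\beta$ are nondecreasing and these intervals are monotonically ordered and weakly disjoint.

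To prove existence, fix $\omega\geq\omega_0=\beta(\lambda_0)$ and let $\lambda_\omega:=\sup S$, where $S:=\{\lambda\in[\lambda_0,\lambda^*]\cap\mathbb{R}:\mathcal{K}_\lambda\neq\emptyset,\ \beta(\lambda)\leq\omega\}$. The set $S$ contains $\lambda_0$, and Lemma \ref{lemma_omega_infty} implies that $b^\top u$-values over $\bigcup_\lambda\mathcal{K}_\lambda$ become unbounded as $\lambda$ approaches $\lambda^*$ in each of its three structural cases, so $\lambda_\omega$ is finite. I then verify $\lambda_\omega\in S$ and $\alpha(\lambda_\omega)\geq\omega$: if $\lambda_\omega<\lambda^*$, Lemma \ref{lemma_semicontinuity1} applied to a sequence $\lambda_n\uparrow\lambda_\omega$ in $S$ yields $\beta(\lambda_\omega)\leq\omega$, and Lemma \ref{lemma_semicontinuity2} applied to $\lambda_n\downarrow\lambda_\omega$ (which lie outside $S$ and hence satisfy $\beta(\lambda_n)>\omega$) yields $\alpha(\lambda_\omega)\geq\omega$; in the boundary case $\lambda_\omega=\lambda^*<+\infty$ with $\mathcal{K}_{\lambda^*}$ unbounded, $\alpha(\lambda^*)=+\infty$ automatically and Lemma \ref{lemma_semicontinuity1} again gives $\beta(\lambda^*)\leq\omega$, while $\lambda_\omega=\lambda^*$ with $\mathcal{K}_{\lambda^*}=\emptyset$ cannot occur because there $\beta(\lambda)\to+\infty$ as $\lambda\uparrow\lambda^*$ by the argument in the proof of Lemma \ref{lemma_omega_infty}(b). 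Hence $\omega\in[\beta(\lambda_\omega),\alpha(\lambda_\omega)]$, and convexity of $\mathcal{K}_{\lambda_\omega}$ produces $u_\omega\in\mathcal{K}_{\lambda_\omega}$ with $b^\top u_\omega=\omega$, giving \eqref{system_omega}.

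Monotonicity of $\psi$ is direct from $(\mathcal B_8)_\lambda$: if $\omega_1<\omega_2$ and $\lambda_i\in\psi(\omega_i)$ satisfied $\lambda_1>\lambda_2$, we would obtain $\omega_1\geq\omega_2$, a contradiction. The limit $\psi(\omega)\to\lambda^*$ as $\omega\to+\infty$ holds because otherwise $\psi$ would be bounded above by some $\lambda'<\lambda^*$ with $\alpha(\lambda')<+\infty$, which is incompatible with the inequality $\omega\leq\alpha(\psi(\omega))\leq\alpha(\lambda')$ for every $\omega$. Under $(\mathcal B_8)^+_\lambda$, strict monotonicity makes the intervals $[\beta(\lambda),\alpha(\lambda)]$ pairwise disjoint across distinct $\lambda$, so the $\lambda$ containing $\omega$ in its interval is unique and $\psi$ is single-valued. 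For continuity at $\omega$, take $\omega_n\to\omega$; the monotone sequence $\lambda_n=\psi(\omega_n)$ has a limit $\lambda'$, and choosing $u_n\in\mathcal{K}_{\lambda_n}$ with $b^\top u_n=\omega_n$ and applying Lemma \ref{lemma_semicontinuity1} or \ref{lemma_semicontinuity2} to each one-sided subsequence forces $\omega\in[\beta(\lambda'),\alpha(\lambda')]$, hence $\lambda'=\psi(\omega)$ by uniqueness. The most delicate point throughout is the boundary case $\lambda_\omega=\lambda^*$, where the three structural scenarios of Lemma \ref{lemma_omega_infty} must be addressed separately.
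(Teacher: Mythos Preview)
Your proof is correct and follows essentially the same route as the paper: a monotone intermediate-value argument on the $b^\top u$-values over $\mathcal K_\lambda$, driven by Lemmas \ref{lemma_omega_infty}, \ref{lemma_semicontinuity1}, \ref{lemma_semicontinuity2}, followed by a convex-combination step to produce $u_\omega$. The only cosmetic difference is that the paper defines two auxiliary values $\lambda_1=\sup\{\lambda:\omega\geq\beta(\lambda)\}$ and $\lambda_2=\max\{\lambda_1,\inf\{\lambda:\omega\leq\alpha(\lambda)\}\}$ and shows $\lambda_1=\lambda_2$ by contradiction, whereas you work with the single supremum $\lambda_\omega=\sup S$ and verify directly that $\omega\in[\beta(\lambda_\omega),\alpha(\lambda_\omega)]$; your explicit treatment of the boundary scenarios at $\lambda^*$ is, if anything, slightly more detailed than the paper's.
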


\begin{proof}
First, we prove the solvability of \eqref{system_omega} for arbitrary $\omega\geq\omega_0$. Define
\begin{align*}
\lambda_1&:=\sup\big\{\lambda\geq\lambda_0\ |\; \omega\geq\min_{u_{\lambda}\in\mathcal K_{\lambda}}[b^\top u_{\lambda}]\big\},\\
\lambda_2&:=\max\Big\{\lambda_1,\;\inf\big\{\lambda\geq\lambda_0\ |\; \omega\leq\sup_{u_{\lambda}\in\mathcal K_{\lambda}}[b^\top u_{\lambda}]\big\}\Big\}.
\end{align*}
By Lemma \ref{lemma_omega_infty}, there exists $\lambda\in[\lambda_0,\lambda^*]\cap\mathbb R$ and $u_{\lambda}\in\mathcal K_{\lambda}$ such that $b^\top u_{\lambda}\geq \omega$. Therefore, the values $\lambda_1$ and $\lambda_2$ are finite and $\lambda_1\leq\lambda_2\leq\lambda^*$. Suppose that $\lambda_1<\lambda_2$. Then there exists $\lambda\in(\lambda_1,\lambda_2)$. Since $\lambda<\lambda^*$, Theorem \ref{theorem_K_lambda} implies $\mathcal K_\lambda\neq\emptyset$. By the definitions of $\lambda_1$ and $\lambda_2$, we have
$$\sup_{u_{\lambda}\in\mathcal K_{\lambda}}[b^\top u_{\lambda}]<\omega<\min_{u_{\lambda}\in\mathcal K_{\lambda}}[b^\top u_{\lambda}],$$
which is a contradiction. Therefore, $\lambda_1=\lambda_2=:\lambda_\omega$. From the definition of $\lambda_\omega$ and Lemma \ref{lemma_semicontinuity1}, it follows that the solution set $\mathcal K_{\lambda_\omega}$ is nonempty and there exist $u_1,u_2\in\mathcal K_{\lambda_\omega}$ such that $\omega_1\leq\omega\leq\omega_2$, where $\omega_i:=b^\top u_i$, $i=1,2$. Let
$$u_\omega=\frac{\omega_2-\omega}{\omega_2-\omega_1}u_1+\frac{\omega-\omega_1}{\omega_2-\omega_1}u_2.$$
Then, it is easy to verify that $b^\top u_\omega=\omega$. In addition, $u_\omega$ is a convex combination of $u_1$ and $u_2$. Therefore, $u_\omega\in\mathcal K_{\lambda_\omega}$ and thus the pair $(u_\omega,\lambda_\omega)$ satisfies \eqref{system_omega}.

The mapping $\psi\colon\omega\mapsto\lambda_\omega$ is nondecreasing as follows from $(\mathcal B_8)_\lambda$. From Lemma \ref{lemma_omega_infty} and its proof, we have $\lim_{\omega\rightarrow+\infty}\psi(\omega)=\lambda^*$. Under the assumption $(\mathcal B_8)^+_\lambda$, the function $\psi$ is single-valued. Continuity of $\psi$ follows from Lemmas \ref{lemma_semicontinuity1} and \ref{lemma_semicontinuity2}.
\end{proof}

Theorem \ref{theorem_omega_SSRM} enables us to determine the safety factor $\lambda^*$ indirectly, using the parameter $\omega$ and its increase to $+\infty$. The main advantage of this indirect continuation is the fact that the extended system \eqref{system_omega} has a solution for any $\omega\geq\omega_0$ unlike the original system \eqref{system_lambda}. 

\subsection{Limit analysis approach for the safety factor $\lambda^*$}
\label{subsec_LA4SSRM}

To employ the LA approach from Section \ref{subsec_LA} for the determination of $\lambda^*$ defined by \eqref{FoS_SSRM}, we need to combine the parametrizations by $\lambda$ and $t$. Consider the following system of nonlinear equations for given $\lambda\geq\lambda_0$ and $t\geq0$:
\begin{equation}
	\mbox{find } u_{\lambda,t}\in\mathbb R^n: \quad F_\lambda(u_{\lambda,t})=tb,
	\label{system_lambda_t}
\end{equation}
and the corresponding minimization problem: 
\begin{equation}
	\mathcal J_{\lambda,t}(u_{\lambda,t})\leq \mathcal J_{\lambda,t}(v)\quad \text{for all } v\in \mathbb R^n,\quad \mathcal J_{\lambda,t}(v):=\mathcal I_\lambda(v)-tb^\top v.
	\label{system_lambda2_t}
\end{equation}
For any $\lambda\geq\lambda_0$, we define the following FoS, which is analogous to the one from Section \ref{sec_LA}:
\begin{align}
	\ell(\lambda)&:=\;\mbox{{supremum} of } \; \bar t\geq 0\;\;\mbox{such that $\mathcal K_{\lambda,t}$ is nonempty for any $t\in[0,\bar t]$,}
	\label{ell_lambda}
\end{align}
where $\mathcal K_{\lambda,t}$ denotes the solution set to \eqref{system_lambda_t} (or to \eqref{system_lambda2_t}). The definition again admits the case $\ell(\lambda)=+\infty$ for some $\lambda\geq\lambda_0$. It is easy to see that the assumptions $(\mathcal B_1)_\lambda$, $(\mathcal B_2)_\lambda$ and $(\mathcal B_4)_\lambda$ are sufficient to extend the results from Section \ref{sec_LA} on the safety factor $\ell(\lambda)$ for any $\lambda\geq\lambda_0$. From $(\mathcal B_5)_\lambda$ and $(\mathcal B_3)_\lambda$, it follows that the function $\ell$ is nonincreasing and $\ell(\lambda_0)>1$.

Next, we need to modify $(\mathcal B_3)^+_\lambda$ to be valid for a family of the functions $\mathcal J_{\lambda,t}$:
\begin{itemize}
\item[$(\mathcal B_3)_{\lambda,t}^+$] Let $t\in[0,\ell(\lambda_0)]\cap\mathbb R$. Then:
\begin{itemize}
    \item[(i)] The function $\mathcal J_{\lambda_0,1}=\mathcal J_{\lambda_0}$ is coercive in $\mathbb R^n$.
    \item[(ii)] If $\mathcal J_{\bar\lambda,t}$ is bounded from below in $\mathbb R^n$ for some $\bar\lambda>\lambda_0$ then $\mathcal J_{\lambda,t}$ is coercive in $\mathbb R^n$ for any $\lambda\in[\lambda_0,\bar\lambda)$.
    \item[(iii)] If $\mathcal J_{\bar\lambda,t}$ is coercive in $\mathbb R^n$ for some $\bar\lambda>\lambda_0$ then there exists $\epsilon>0$ such that $\mathcal J_{\bar\lambda+\epsilon,t}$ is also coercive.
    \item[(iv)] If $\mathcal J_{\bar\lambda,t}$ is unbounded from below in $\mathbb R^n$ for some $\bar\lambda>\lambda_0$ then there exists $\epsilon>0$ such that $\mathcal J_{\bar\lambda-\epsilon,t}$ is also unbounded.
\end{itemize}
\end{itemize} 

Finally, in order to use the LA approach from Section \ref{subsec_LA}, we introduce the following notation for any $\lambda\geq\lambda_0$:
\begin{equation}
	\mathcal I_{\omega,\lambda}\colon \mathbb R^n\rightarrow \mathbb R,\quad \mathcal I_{\omega,\lambda}(v):=\frac{1}{\omega}\mathcal I_{\lambda}(\omega v),\quad v\in\mathbb R^n,\;\omega>0,
	\label{I_omega_lambda}
\end{equation}
\begin{equation}
	\mathcal I_{\infty,\lambda}\colon \mathbb R^n\rightarrow \mathbb R_+\cup\{+\infty\},\quad \mathcal I_{\infty,\lambda}(v):=\lim_{\omega\rightarrow+\infty}\mathcal I_{\omega,\lambda}(v),\quad v\in\mathbb R^n,
	\label{I_infty_lambda}
\end{equation}
\begin{equation}
	\mathcal C_\lambda:=\mathrm{dom}\, \mathcal I_{\infty,\lambda}=\{v\in\mathbb R^n\ |\;\; \mathcal I_{\infty,\lambda}(v)<+\infty\},
	\label{C_lambda}
\end{equation}
\begin{equation}
	\ell_\infty(\lambda):=\inf_{\substack{v\in\mathbb R^n\\ b^\top v=1}}\ \mathcal I_{\infty,\lambda}(v)=\inf_{\substack{v\in\mathcal C_\lambda\\ b^\top v=1}}\ \mathcal I_{\infty,\lambda}(v),
	\label{LA_problem_lambda}
\end{equation}
and modify the assumptions $(\mathcal A_4)$ and $(\mathcal A_5)$ from Section \ref{subsec_LA} to be valid for any $\lambda\geq\lambda_0$:
\begin{itemize}
	\item[$(\mathcal B_9)_\lambda$] \textit{For any sequence $\{v_\omega\}\subset\mathbb R^n$ such that $\{\mathcal I_{\omega,\lambda}(v_\omega)\}$ is bounded and $v_\omega\rightarrow v_\infty$ as $\omega\rightarrow+\infty$, the following properties hold:} 
	\begin{equation}
		v_\infty \in\mathcal C_\lambda\quad\mbox{and}\quad \liminf_{\omega\rightarrow+\infty}\mathcal I_{\omega,\lambda}(v_\omega)\geq\mathcal I_{\infty,\lambda}(v_\infty).
		\label{A4_lambda}
	\end{equation}
	\item[$(\mathcal B_{10})_\lambda$] \textit{For any $v\in\mathcal C_\lambda$, there exists $c_{v,\lambda}>0$ such that}
	\begin{equation}
		\omega\mathcal I_{\infty,\lambda}(v)-c_{v,\lambda}\leq \mathcal I_\lambda(\omega v)\leq \omega\mathcal I_{\infty,\lambda}(v)\quad\mbox{for all } \omega\geq0.
		\label{I_infty_bound_lambda}
	\end{equation}
\end{itemize}
From $(\mathcal B_{1})_\lambda$, $(\mathcal B_{2})_\lambda$, $(\mathcal B_{4})_\lambda$, $(\mathcal B_{9})_\lambda$ and $(\mathcal B_{10})_\lambda$, it follows that $\ell(\lambda)=\ell_\infty(\lambda)$ for any $\lambda\geq\lambda_0$. Further, we have the following result.

\begin{theorem}
Let the assumptions $(\mathcal B_1)_\lambda$, $(\mathcal B_{2})_\lambda$, $(\mathcal B_3)_{\lambda,t}^+$, $(\mathcal B_{4})_\lambda$, $(\mathcal B_{5})_\lambda$, $(\mathcal B_{9})_\lambda$ and $(\mathcal B_{10})_\lambda$ be satisfied. Then the function $\ell$ defined by \eqref{ell_lambda} is continuous and decreasing for any $\lambda\geq\lambda_0$ such that $\ell(\lambda)<+\infty$ and $\ell(\lambda)>1$ for any $\lambda\in[\lambda_0,\lambda^*)$. In addition, if $\lambda^*<+\infty$, then $\lambda^*$ is a unique solution of the equation $\ell(\lambda)=1$.
\end{theorem}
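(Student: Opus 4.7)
The plan is to reduce each claim to the $t$-parametrization results of Section \ref{sec_LA} applied at fixed $\lambda$, combined with the $\lambda$-monotonicity $(\mathcal B_5)_\lambda$ and the stability conditions $(\mathcal B_3)_{\lambda,t}^+$. The point is that for each fixed $\lambda\ge\lambda_0$ the hypotheses $(\mathcal B_1)_\lambda$, $(\mathcal B_2)_\lambda$, $(\mathcal B_4)_\lambda$, $(\mathcal B_9)_\lambda$, $(\mathcal B_{10})_\lambda$ play exactly the role of $(\mathcal A_1)$--$(\mathcal A_5)$ for the family $\{\mathcal J_{\lambda,t}\}_{t\ge 0}$, so Theorems \ref{theorem_K} and \ref{theorem_T*_bound} transfer verbatim and yield the characterization $\ell(\lambda)=\sup\{t\ge 0\ |\ \inf_v\mathcal J_{\lambda,t}(v)>-\infty\}$, with $\mathcal J_{\lambda,t}$ coercive for $t<\ell(\lambda)$, unbounded from below for $t>\ell(\lambda)$, and bounded from below at $t=\ell(\lambda)$ when $\ell(\lambda)<+\infty$.

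First I would establish monotonicity: for $\lambda_1<\lambda_2$, assumption $(\mathcal B_5)_\lambda$ gives $\mathcal J_{\lambda_1,t}\ge\mathcal J_{\lambda_2,t}$, so boundedness from below at $\lambda_2$ passes to $\lambda_1$ and hence $\ell(\lambda_1)\ge\ell(\lambda_2)$. Continuity at any $\bar\lambda$ with $\ell(\bar\lambda)<+\infty$ then follows from $(\mathcal B_3)_{\lambda,t}^+$ (iii)--(iv) combined with this monotonicity. For right-continuity, pick $t<\ell(\bar\lambda)$, so $\mathcal J_{\bar\lambda,t}$ is coercive; apply (iii) to extend coercivity to $\mathcal J_{\bar\lambda+\epsilon,t}$, and $(\mathcal B_5)_\lambda$ then propagates it to every $\lambda\in(\bar\lambda,\bar\lambda+\epsilon]$, giving $\ell(\lambda)>t$ and hence $\liminf_{\lambda\to\bar\lambda^+}\ell(\lambda)\ge\ell(\bar\lambda)$; the monotone upper bound closes the argument. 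Left-continuity is obtained symmetrically from (iv) with $t>\ell(\bar\lambda)$ and $\mathcal J_{\bar\lambda,t}$ unbounded from below.

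The inequality $\ell(\lambda)>1$ on $[\lambda_0,\lambda^*)$ holds because $(\mathcal B_3)_{\lambda,t}^+$ taken at $t=1$ supplies the full hypotheses $(\mathcal B_3)_\lambda$ and $(\mathcal B_3)_\lambda^+$ of Theorem \ref{theorem_K_lambda}; that theorem delivers $\mathcal K_\lambda=\mathcal K_{\lambda,1}$ nonempty and bounded for $\lambda<\lambda^*$, placing $1$ strictly below the critical $t$-level. For uniqueness of the zero of $\ell-1$, Theorem \ref{theorem_K_lambda} also shows that $\mathcal K_{\lambda^*,1}$ is empty or unbounded while $\mathcal J_{\lambda^*,1}$ is still bounded from below, which is exactly the Theorem \ref{theorem_K}-behavior at $t=\ell(\lambda^*)$, forcing $\ell(\lambda^*)=1$. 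For $\lambda>\lambda^*$, $\mathcal J_{\lambda,1}$ is unbounded from below, so $\ell(\lambda)\le 1$, while the transferred Theorem \ref{theorem_T*_bound} keeps $\mathcal J_{\lambda,\ell(\lambda)}$ bounded from below; thus $\ell(\lambda)\ne 1$ and therefore $\ell(\lambda)<1$, pinning the unique zero at $\lambda^*$. The main obstacle will be bookkeeping—keeping the two parameters $\lambda$ and $t$ disentangled and verifying that each invocation of Theorems \ref{theorem_K}, \ref{theorem_T*_bound}, \ref{theorem_K_lambda} is legitimized by the precise set of hypotheses available here.
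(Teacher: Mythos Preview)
Your strategy mirrors the paper's proof closely: reduce to the $t$-analysis of Section~\ref{sec_LA} at fixed $\lambda$, then use $(\mathcal B_3)_{\lambda,t}^+$ to pass information across $\lambda$. The continuity arguments, the proof that $\ell(\lambda)>1$ on $[\lambda_0,\lambda^*)$, the identification $\ell(\lambda^*)=1$, and the bound $\ell(\lambda)<1$ for $\lambda>\lambda^*$ are all correct and essentially the paper's reasoning (the paper phrases left/right continuity with sequences rather than $\epsilon$-neighbourhoods, but this is cosmetic).

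There is one genuine gap. Your monotonicity step uses only $(\mathcal B_5)_\lambda$ and therefore yields merely $\ell(\lambda_1)\ge\ell(\lambda_2)$ for $\lambda_1<\lambda_2$; the theorem asserts that $\ell$ is \emph{decreasing}, i.e.\ strictly, on its effective domain. Strictness does not follow from what you wrote, and it is not recovered by your uniqueness argument (that argument only controls the crossing of level $1$, not possible plateaus elsewhere). The paper obtains the strict inequality via $(\mathcal B_3)_{\lambda,t}^+$(ii): for $\lambda_1<\lambda_2$ with $\ell(\lambda_2)<+\infty$, the transferred Theorem~\ref{theorem_T*_bound} gives $\mathcal J_{\lambda_2,\ell(\lambda_2)}$ bounded from below, whence (ii) makes $\mathcal J_{\lambda_1,\ell(\lambda_2)}$ coercive; by Lemma~\ref{lemma_K_t} and Theorem~\ref{theorem_K} this forces $\ell(\lambda_1)>\ell(\lambda_2)$. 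Adding this one line completes your proof.
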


\begin{proof}
Let $\lambda_1,\lambda_2\geq\lambda_0$ be such that $\lambda_1<\lambda_2$ and $\ell(\lambda_i)<+\infty$, $i=1,2$. From Theorem \ref{theorem_T*_bound} (applied to $\mathcal I_{\lambda_2}$ instead of $\mathcal I$), the function $\mathcal J_{\lambda_2,\ell(\lambda_2)}$ is bounded from below. Therefore, $\mathcal J_{\lambda_1,\ell(\lambda_2)}$ is coercive as follows from $(\mathcal B_3)_{\lambda,t}^+$. By Theorem \ref{theorem_K} and its proof (with $\mathcal I_{\lambda_1}$ instead of $\mathcal I$), we obtain $\ell(\lambda_1)>\ell(\lambda_2)$. Therefore, $\ell$ is decreasing in its effective domain. 

In view of Theorem \ref{theorem_K_lambda}, the function $\mathcal J_{\lambda,1}=\mathcal J_{\lambda}$ is coercive for any $\lambda\in[\lambda_0,\lambda^*)$. Then, from Theorem \ref{theorem_K} and its proof (with $\mathcal I_{\lambda}$ instead of $\mathcal I$), it follows that there exists $\epsilon>0$ such that $\mathcal J_{\lambda,1+\epsilon}$ is also coercive. Thus $\ell(\lambda)>1$ for any $\lambda\in[\lambda_0,\lambda^*)$. If $\lambda^*<+\infty$ then $\mathcal J_{\lambda,1}$ is unbounded for any $\lambda>\lambda^*$ as follows from the definition of $\lambda^*$. From Theorem \ref{theorem_T*_bound} (with $\mathcal I_{\lambda}$ instead of $\mathcal I$), the function $\mathcal J_{\lambda,1-\epsilon}$ is also unbounded for sufficiently small $\epsilon>0$ and thus $\ell(\lambda)<1$ for any $\lambda>\lambda^*$.

The proof of continuity of $\ell$ is split into two parts. First, let $\{\lambda_n\}$ be a decreasing sequence converging to $\lambda\geq\lambda_0$, where $\ell(\lambda)<+\infty$. Since $\ell$ is a decreasing function, we have $\ell(\lambda)>\ell(\lambda_n)$ for any $n>0$. From the definition of $\ell(\lambda)$, it follows that the function $\mathcal J_{\lambda,t}$ is coercive for any $t<\ell(\lambda)$. Due to $(\mathcal B_3)_{\lambda,t}^+$, there exists $m\in\mathbb N$ such that $\mathcal J_{\lambda_m,t}$ is also coercive. Hence, 
$$\lim_{m\rightarrow+\infty}\ell(\lambda_m)\geq t\quad\mbox{for all }t<\ell(\lambda),$$
i.e., 
$$\lim_{m\rightarrow+\infty}\ell(\lambda_m)\geq \ell(\lambda)>\ell(\lambda_n)\quad\mbox{for all }n\in\mathbb N.$$
Therefore, $\ell(\lambda_n)\rightarrow\ell(\lambda)$ as $n\rightarrow+\infty$.

Second, let $\{\lambda_n\}$ be an increasing sequence such that  $\ell(\lambda_n)<+\infty$ and $\lambda_n\rightarrow\lambda$ as $n\rightarrow+\infty$. Since $\ell$ is a decreasing function, we have $\ell(\lambda_n)>\ell(\lambda)$. Denote 
\begin{equation}
\bar t:=\lim_{n\rightarrow+\infty}\ell(\lambda_n)\geq\ell(\lambda).
\label{bar_t}
\end{equation}
Since $\ell(\lambda_n)>\bar t$, the function $\mathcal J_{\lambda_n,\bar t}$ is coercive. From $\lambda_n\rightarrow\lambda$, $(\mathcal B_3)_{\lambda,t}^+$ and Theorem \ref{theorem_K_lambda} (applied to $\mathcal J_{\lambda,\bar t}$ instead of $\mathcal J_{\lambda}$), we obtain that $\mathcal J_{\lambda,\bar t}$ is bounded from below. Then, Theorem \ref{theorem_K} (with $\mathcal I_\lambda$ instead of $\mathcal I$) yields $\ell(\lambda)\geq\bar t$. Therefore, the equality holds in \eqref{bar_t}.

From the derived properties of $\ell$, it is easy to see that if $\lambda^*<+\infty$, then $\lambda^*$ is a unique solution of the equation $\ell(\lambda)=1$.
\end{proof}

The function $\ell$ and its properties are sketched in Figure \ref{fig_ell}. 

\begin{figure}[h]
	\centering
	\begin{picture}(150,140)
		
		\put(10,10){\vector(1,0){140}}
		\put(10,10){\vector(0,1){130}}
		
		\put(8,70){\line(1,0){4}}
		\put(104,8){\line(0,1){4}}
		\put(70,93){\circle*{3}}
		\put(8,93){\line(1,0){4}}
		\put(70,8){\line(0,1){4}}
		\put(150,6){\makebox(0,0)[t]{$\lambda$}}
		
		\put(70,6){\makebox(0,0)[t]{$\lambda_0$}}
		\put(7,93){\makebox(0,0)[r]{$\ell(\lambda_0)$}}
		\put(142,40){\makebox(0,0)[l]{$\ell$}}
	\qbezier(20,120)(100,80)(140,40)
	\put(142,40){\makebox(0,0)[l]{$\ell$}}

			\put(104,70){\circle*{3}}
			\put(8,70){\line(1,0){4}}
			\put(104,8){\line(0,1){4}}
			\put(104,6){\makebox(0,0)[t]{$\lambda^*$}}
			\put(7,70){\makebox(0,0)[r]{1}}
			\multiput(10,70)(4,0){32}{\line(1,0){2}} 
			\multiput(104,10)(0,4){15}{\line(0,1){2}}
					
	\end{picture}
	\caption{Visualization of the function $\ell$, which relates $\lambda^*$ with LA.}
	\label{fig_ell}
\end{figure}

\subsection{Examples with solutions in closed forms}
\label{subsec_examples_2D}

The previous assumptions and results are now illustrated with two examples, which mimic different EP models. The corresponding functions $\mathcal I_{\lambda}$ have the following uniform definition:
\begin{equation}
\mathcal I_{\lambda}(v)=\max_{x\in M_\lambda}\Big\{x^\top v-\frac{1}{2}\|x\|^2\Big\},\quad \lambda\geq\lambda_0,
\label{example2}
\end{equation}
where $M_\lambda\subset\mathbb R^n$ is a closed, convex, nonempty set and $0\in\mathrm{int}\, M_\lambda$ for any $\lambda\geq\lambda_0$. It is well-known (see e.g. \cite{Mo65}) that the function $\mathcal I_{\lambda}$ is convex, coercive, continuously differentiable, $\mathcal I_\lambda(0)=0$ and $F_\lambda=\nabla \mathcal I_{\lambda}$ is a projection of $\mathbb R^n$ onto $M_\lambda$. Therefore, $F_\lambda(b)=b$ (i.e., $b\in\mathcal K_\lambda$) holds if and only if $b\in M_\lambda$. In addition, the solution set $\mathcal K_\lambda$ is unbounded if $b\in \partial M_\lambda$, $\mathcal K_\lambda=\{b\}$ if $b\in \mathrm{int}\, M_\lambda$ and $\mathcal K_\lambda=\emptyset$ if $b\not\in M_\lambda$. Hence, $\mathcal J_\lambda$ is coercive if and only if $b\in\mathrm{int}\,M_\lambda$ (see Lemma \ref{lemma_K_t}). We also mention that 
\begin{equation}
	\mathcal I_{\lambda,\omega}(v)=\max_{x\in M_\lambda}\Big\{x^\top v-\frac{1}{2\omega}\|x\|^2\Big\},\quad \mathcal I_{\infty,\lambda}(v)=\sup_{x\in M_\lambda}\{x^\top v\},
	\label{example22}
\end{equation}
as follows from the results presented e.g. in \cite{HRS16}. We consider two special choices of the family $\{M_\lambda\}$.

\begin{example}\label{examp1}
Let $\lambda_0\in(0,1]$, $M_\lambda=\{x\in\mathbb R^n\ |\; \|x\|\leq\lambda^{-1}\}$ for any $\lambda\geq\lambda_0$, $b\neq0$ and $\|b\|<\lambda_0^{-1}$. Then, it is easy to verify that $(\mathcal B_1)_\lambda-(\mathcal B_5)_\lambda$ and $(\mathcal B_3)^+_\lambda$ hold. In addition, $b\in\partial M_\lambda$ for $\lambda=\|b\|^{-1}$ and thus $\lambda^*=\|b\|^{-1}$, $\mathcal K_{\lambda^*}=\{u^*\in\mathbb R^n\ |\; \exists \alpha\geq1:\; u^*=\alpha b\}.$
Hence, $(\mathcal B_8)_\lambda$ hold, but $(\mathcal B_8)^+_\lambda$ does not hold. Assumptions $(\mathcal B_6)_\lambda$ and $(\mathcal B_7)_\lambda$ can be verified, for example, using the following explicit form of $\mathcal I_\lambda$:
$$\mathcal I_\lambda(v)=\left\{\begin{array}{ll}
	\frac{1}{2}\|v\|^2, & \|v\|\leq\frac{1}{\lambda}\\[1mm]
	\frac{1}{\lambda}\|v\|-\frac{1}{2\lambda^2}, & \|v\|\geq\frac{1}{\lambda},
\end{array}\right.
\quad\text{for all } v\in\mathbb R^n.$$
Indeed, it suffices to choose $v_\lambda=\lambda b$ in $(\mathcal B_7)_\lambda$. Therefore, one can apply Theorem \ref{theorem_omega_SSRM} and find the mapping $\psi:\omega\mapsto\lambda$, $\omega\geq\omega_0=\|b\|$, in the following form:
\begin{equation}
    \psi(\omega_0)=[\lambda_0,\lambda^*]\quad\mbox{and}\quad \psi(\omega)=\lambda^*\quad\forall \omega>\omega_0.
    \label{psi_ex}
\end{equation}
Finally, we focus on the verification of the results from Section \ref{subsec_LA4SSRM}. The assumption $(\mathcal B_3)_{\lambda,t}^+$ can be easily checked. $(\mathcal B_9)_\lambda$ and $(\mathcal B_{10})_\lambda$ can be verified, for example, using the following explicit formulas:
$$
\mathcal I_{\omega,\lambda}(v)=\left\{\begin{array}{ll}
\frac{\omega}{2}\|v\|^2, & \|v\|\leq\frac{1}{\omega\lambda}\\[1mm]
\frac{1}{\lambda}\|v\|-\frac{1}{2\omega\lambda^2}, & \|v\|\geq\frac{1}{\omega\lambda},
\end{array}\right.
\quad \mathcal I_{\infty,\lambda}(v)=\frac{1}{\lambda}\|v\|\quad\text{for all } v\in\mathbb R^n,
$$
$\mathcal C_\lambda=\mathbb R^n$ and
$$\ell(\lambda)=\min_{\substack{v\in\mathbb R^2\\ b^\top v=1}}\mathcal I_{\infty,\lambda}(v)=\frac{1}{\lambda\|b\|}\quad\forall \lambda\geq\lambda_0.$$
We see that the function $\ell$ is decreasing and $\ell(\lambda)=1$ for $\lambda=\lambda^*=1/\|b\|$. 
\end{example}

\begin{example}\label{examp2}
Let $\lambda_0\in(0,1]$ and $M_\lambda=\{x=(x_1,x_2)^\top\in\mathbb R^2\ |\; x_1-\lambda|x_2|+1\geq0\}$ for any $\lambda\geq\lambda_0$, see Figure \ref{fig_K}. 
\begin{figure}
	\centering
	\includegraphics[width=0.4\textwidth]{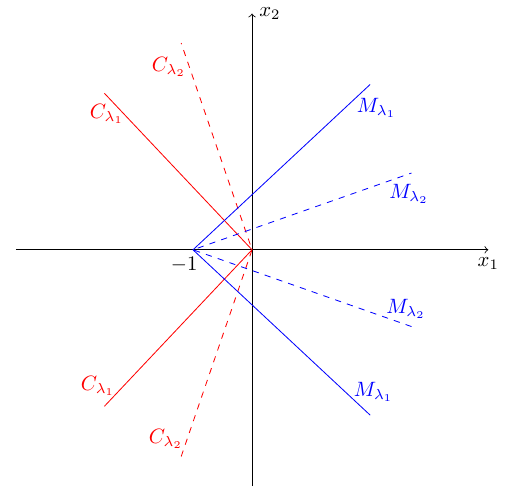}
\caption{Visualization of the sets $M_\lambda$ and $\mathcal C_\lambda$ for $\lambda_1<\lambda_2$.}
\label{fig_K}
\end{figure}
Let $b=(b_1,b_2)^\top$, $b_2\neq0$ and $b_1-\lambda_0|b_2|+1>0$, i.e., $b\in \mathrm{int}\, M_{\lambda_0}$. Then, it is possible to verify  $(\mathcal B_1)_\lambda-(\mathcal B_5)_\lambda$, $(\mathcal B_3)^+_\lambda$ and show that
$$\lambda^*=\frac{1+b_1}{|b_2|}>\lambda_0,\quad \mathcal K_{\lambda^*}=\{b+\alpha w\ |\; \alpha\geq0\},\quad w=\Big(-1,\frac{1+b_1}{b_2}\Big).$$
Further, $(\mathcal B_8)_\lambda$ is satisfied because $b^\top w=1$. However, $(\mathcal B_8)^+_\lambda$ does not hold and $\psi$ is again a multivalued function defined by \eqref{psi_ex}. The remaining assumptions and results are consequences of the following closed forms:
$$\mathcal I_\lambda(v)=\left\{\begin{array}{ll}
	\frac{1}{2}\|v\|^2, & v_1-\lambda|v_2|+1\geq0\\[1mm]
	-v_1-\frac{1}{2}+\frac{(\lambda v_1+|v_2|+\lambda)^2}{2(\lambda^2+1)}, & v_1-\lambda|v_2|+1\leq0,\; v_1+\frac{1}{\lambda}|v_2|+1\geq0,\\[1mm]
	-v_1-\frac{1}{2}, & v_1+\frac{1}{\lambda}|v_2|+1\leq0,
\end{array}\right.$$
$$\mathcal I_{\infty,\lambda}(v)=\left\{\begin{array}{ll}
	-v_1, & v_1+\frac{1}{\lambda}|v_2|\leq0\\[1mm]
	+\infty, & v_1+\frac{1}{\lambda}|v_2|>0,
\end{array}\right.\quad \mathcal C_\lambda=\Big\{v=(v_1,v_2)^\top\in\mathbb R^2\ |\; v_1+\frac{1}{\lambda}|v_2|\leq0\Big\},$$
$$\ell(\lambda)=\frac{1}{\lambda|b_2|-b_1}\quad\text{for all } \lambda\geq\lambda_0,\quad b_2\neq0.$$
We see that $\ell$ is decreasing and $\ell(\lambda^*)=1$ holds.

Let us note that this example is inspired by the Mohr-Coulomb EP constitutive model introduced in the next section. This example is not meaningful for all vectors $b=(b_1,b_2)^\top$. For example, if $b_1<-1$ then $\mathcal K_\lambda=\emptyset$ for any $\lambda>0$ and the key assumption $(\mathcal B_3)_\lambda$ cannot hold. 
\end{example}

\section{Numerical example on slope stability in 3D}
\label{sec_num_example}

In this section, we illustrate the theoretical results for the LL and SSR methods with a numerical example. A static version of the elastic-perfectly plastic model with the Mohr-Coulomb (MC) yield criterion and the associated plastic flow rule is considered. 

First, we briefly describe the corresponding constitutive model, 
which can be defined by the following optimization problem \cite{T85,HR12}:
\begin{equation}
	\Psi(\mbf\varepsilon)=\max_{\mbf\tau,\; f(\mbf\tau)\leq0}\left[\mbf\tau:\mbf\varepsilon-\frac{1}{2}\mathbb D_e^{-1}\mbf\tau:\mbf\tau\right]
	\label{dissip_alpha1}
\end{equation}
Here, $\mbf\varepsilon\in\mathbb R^{3\times3}_{sym}$ is the infinitesimal small strain tensor, $\mbf\tau\in\mathbb R^{3\times3}_{sym}$ is a testing stress tensor, $\mathbb D_e$ denotes the fourth-order elastic tensor representing the Hooke law, and $f$ is the MC yield function defined by
\begin{align}
	f(\mbf\tau)&=(1+\sin\phi)\tau_1-(1-\sin\phi)\tau_3-2c\cos\phi,
\end{align}
where $\tau_1$, $\tau_3$ are the maximal and minimal eigenvalues (principle stresses) of $\mbf\tau$, respectively, $c$ and $\phi$ denote the cohesion and the friction angle, respectively. These material parameters are reduced within the SSR method by the factor $\lambda$. 

Let $\mbf\sigma$ denote the maximizer in \eqref{dissip_alpha1}. Then $\mbf\sigma$ is the Cauchy stress tensor and the implicit function $T\colon\mbf\varepsilon\mapsto\mbf\sigma$ satisfies
\begin{equation}
\frac{\partial\Psi(\mbf\varepsilon)}{\partial\mbf\varepsilon}=T(\mbf\varepsilon)=\mbf\sigma.
\end{equation}
The construction of $T$ and its generalized derivative can be found, e.g., in \cite{NPO08, SCL17}.

Further, the EP problem in terms of displacements have the following weak form:
\begin{equation}
	\mbox{find } {\mbf u}\in V: \quad \int_\Omega T\left(\mbf\varepsilon(\mbf{u})\right):\mbf\varepsilon(\mbf v)\, dx=b(\mbf v) \quad\forall  \mbf v\in V,
	\label{eqn}
\end{equation}
where $\Omega$ is a computational domain, ${V}$ is a Sobolev space of $H^1$-type containing admissible displacements, $\mbf \varepsilon(\mbf u)=\frac{1}{2}\left(\nabla \mbf u+(\nabla \mbf u)^\top\right)$, and the functional $b$ represents a combination of volume and surface external forces. If the problem \eqref{eqn} is discretized by the finite element method, we arrive at the algebraic system \eqref{alg_system}. The corresponding algebraic function $\mathcal I$ can be assembled from the integral
$\int_\Omega \Psi(\varepsilon(\mbf v))\,dx.$

\begin{figure}[h]
	\centering
	\hfill \includegraphics[width=0.5\textwidth]{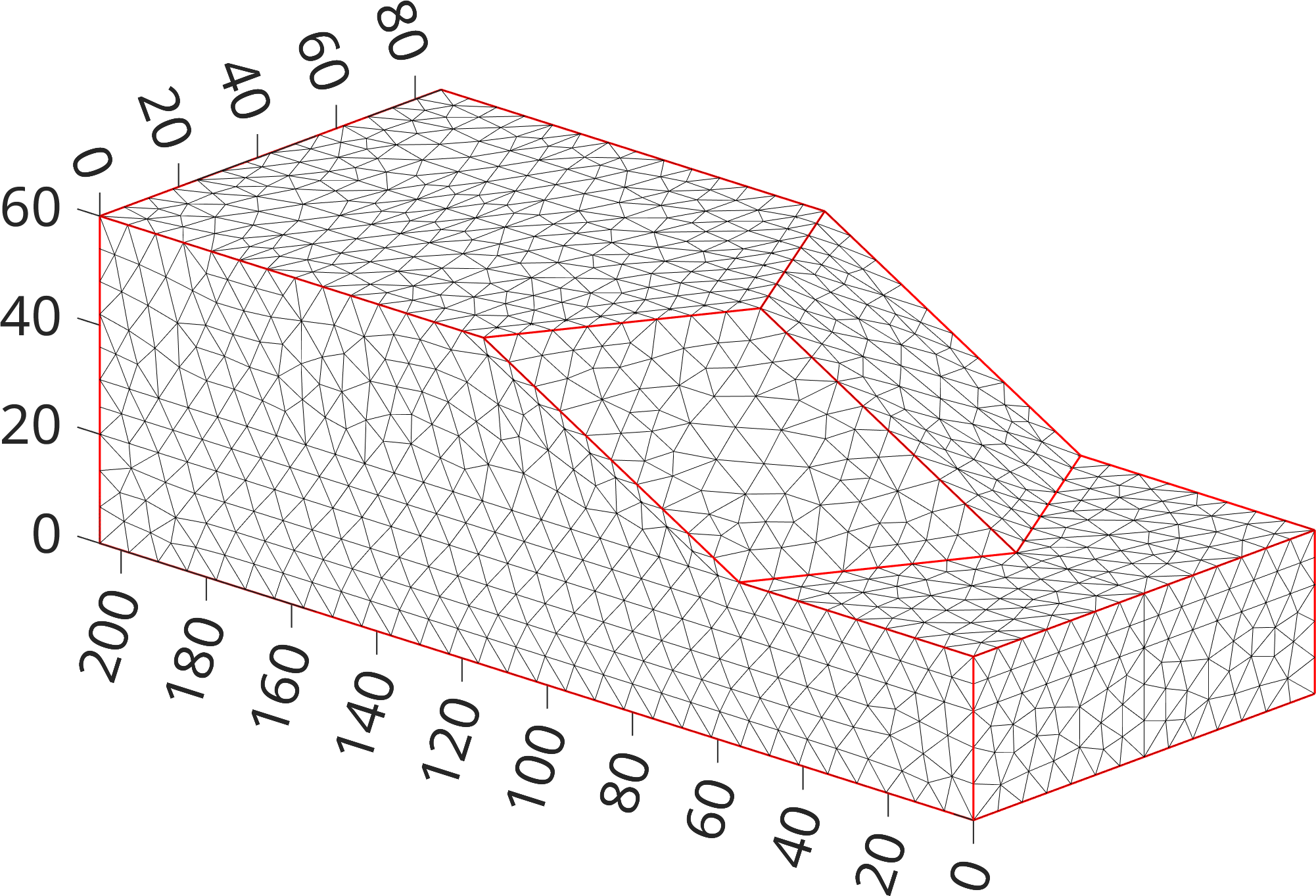}\hfill 
	\caption{Geometry of the computational domain and the investigated mesh.}
	\label{fig_meshes}
\end{figure}

Now, we consider a homogeneous slope depicted in Figure \ref{fig_meshes}. The slope is convex with the angle of $120^\circ$ and the inclination of $31^\circ$. The investigated computational domain is fixed at the bottom and the zero normal displacements are prescribed on all vertical faces. The remaining part of the boundary is free. The gravity load is given by the specific weight $\gamma=20\, $kN/m$^3$. Two elastic material parameters are prescribed: Young's modulus $E=40\,$MPa and Poisson's ratio $\nu=0.3$. Further, we set $\phi=20^{\circ}$ and $c=15\,$kPa. 

For numerical solution, we consider P2 elements with a finite element mesh consisting of 15,356 tetrahedrons and 71,970 unknowns, see Figure \ref{fig_meshes}. Next, the suggested indirect continuation methods for the LL and SSR methods are used. The corresponding extended systems of nonlinear equations are solved by the semismooth Newton method with damping and regularization, see, for example, \cite{CHKS15}. Linearized systems appearing in each Newton's iteration are solved by a deflated variant of Krylov methods as in \cite{KSB25}. In particular, the flexible GMRES method is considered. We use a preconditioner based on separate displacement decomposition \cite{separate_displacement_axelsson} combined with aggregation-based algebraic multigrid (AGMG) \cite{agmg_notay}. This solution concept is implemented using in-house codes in Matlab, which are available for download in \cite{SBBL24}. They build on well-documented codes developed in \cite{SCL17,CSV19,KSB25}.

Details of the continuation curves for the LL and SSR methods are depicted in Figure \ref{fig_continuation}. We see that the curves have the expected properties, i.e., they are increasing and bounded from above by unknown values of $t^*$ and $\lambda^*$, respectively. In particular, we obtain $t^*\doteq2.11$ and $\lambda^*\doteq1.20$ as the maximal values of $t$ and $\lambda$ achieved by these curves. 
\begin{figure}[H]
	\centering
	\hfill \includegraphics[width=0.45\textwidth]{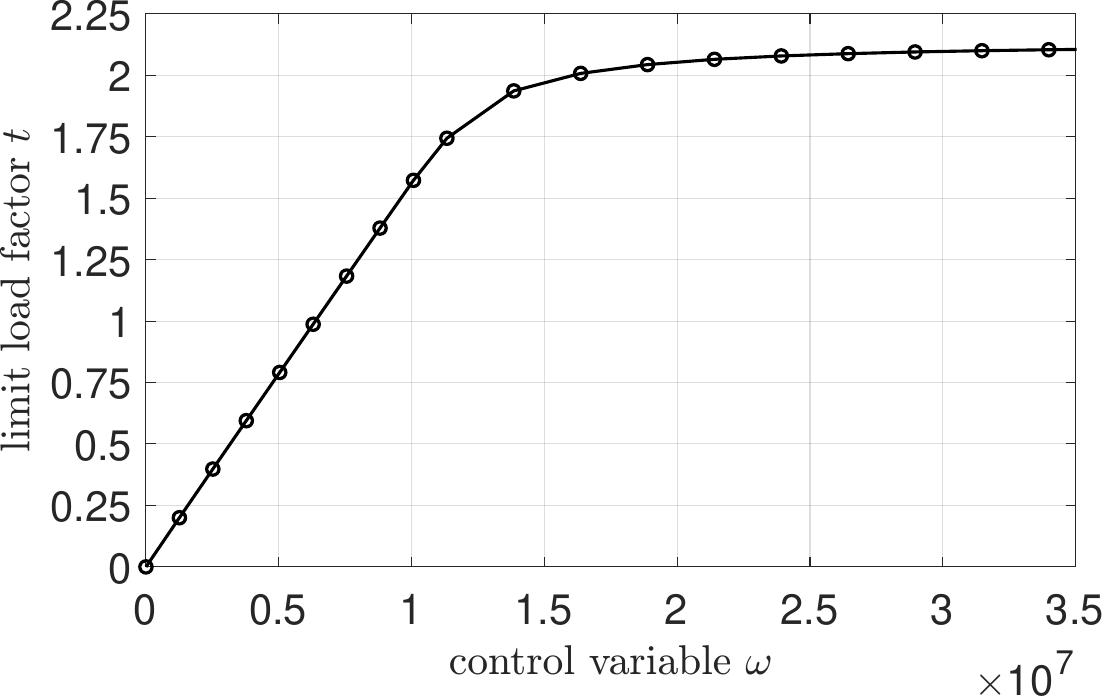}\hfill 
\includegraphics[width=0.45\textwidth]{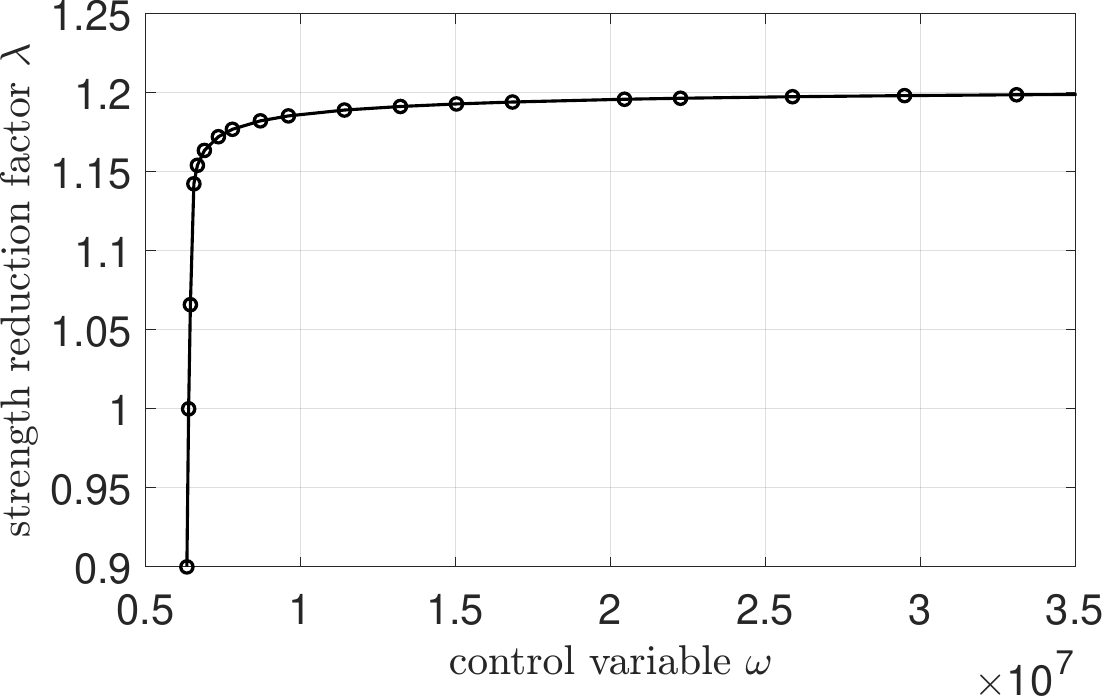}\hfill 
	\caption{Continuation curves for the LL method (left) and the SSR method (right).}
	\label{fig_continuation}
\end{figure}


The corresponding function $\ell$ (introduced in Section \ref{subsec_LA4SSRM}) is depicted in Figure \ref{fig_ell_function}. According to theoretical results, we see that this function is decreasing and its intersection with the value 1 (in red) corresponds to $\lambda^*$. Further, we see that $\ell(1)=t^*\doteq2.11$.
\begin{figure}[H]
	\centering
	\includegraphics[width=0.55\textwidth]{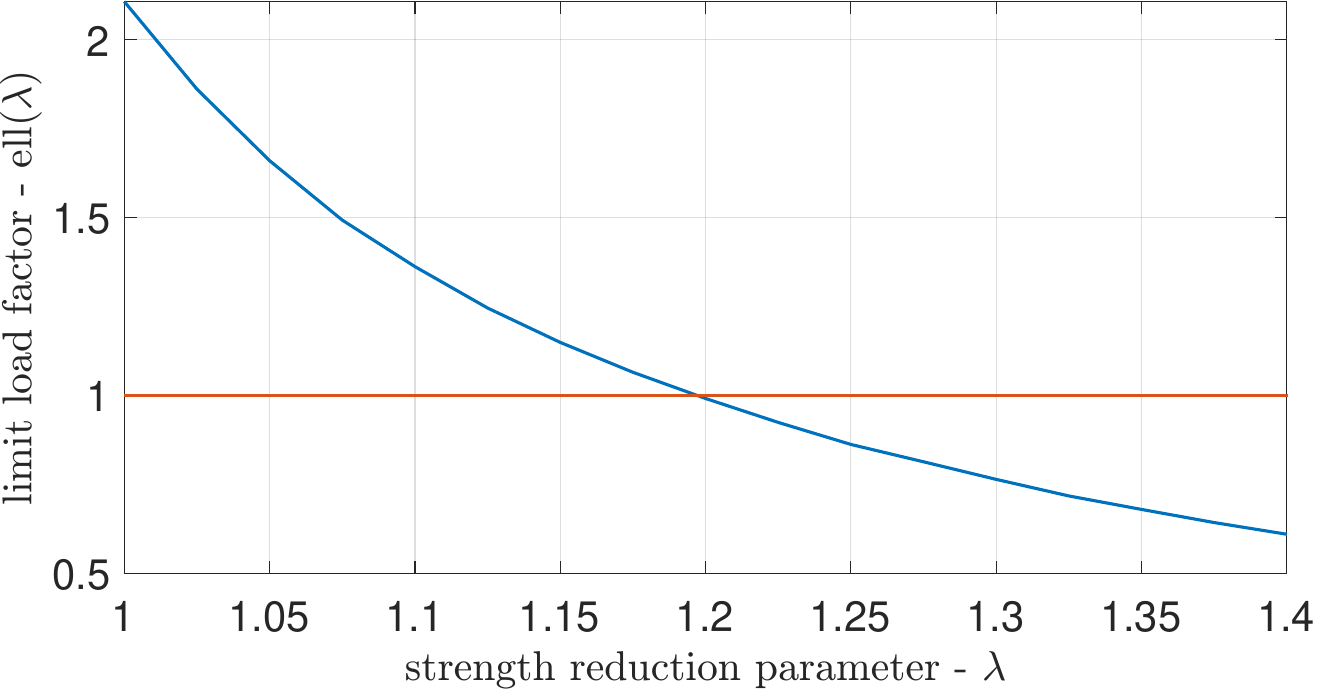}
	\caption{The computed function $\ell$ (in blue).}
	\label{fig_ell_function}
\end{figure}

Finally, one can visualize failure zones corresponding to the safety factors $t^*$ and $\lambda^*$. However, the investigated mesh is too coarse for this purpose. For better visualization, we use a simple mesh adaptivity based on computed values of the norm of the deviatoric strain. For the sake of brevity, the mesh adaptivity is applied only on for the SSR method. The refined mesh consists of 220,510 elements and 917,373 unknowns and the corresponding safety factor $\lambda^*$ is equal to 1.10, thus, it is slightly lower than for the original mesh. The failure zones are depicted in Figure \ref{fig_failure}. For their visualization, the deviatoric strain is used. The mesh refinement is aligned with the failure zones. We see that the failure is relatively deep in some parts of the slope.
\begin{figure}[H]
	\centering
	\includegraphics[width=0.9\columnwidth]{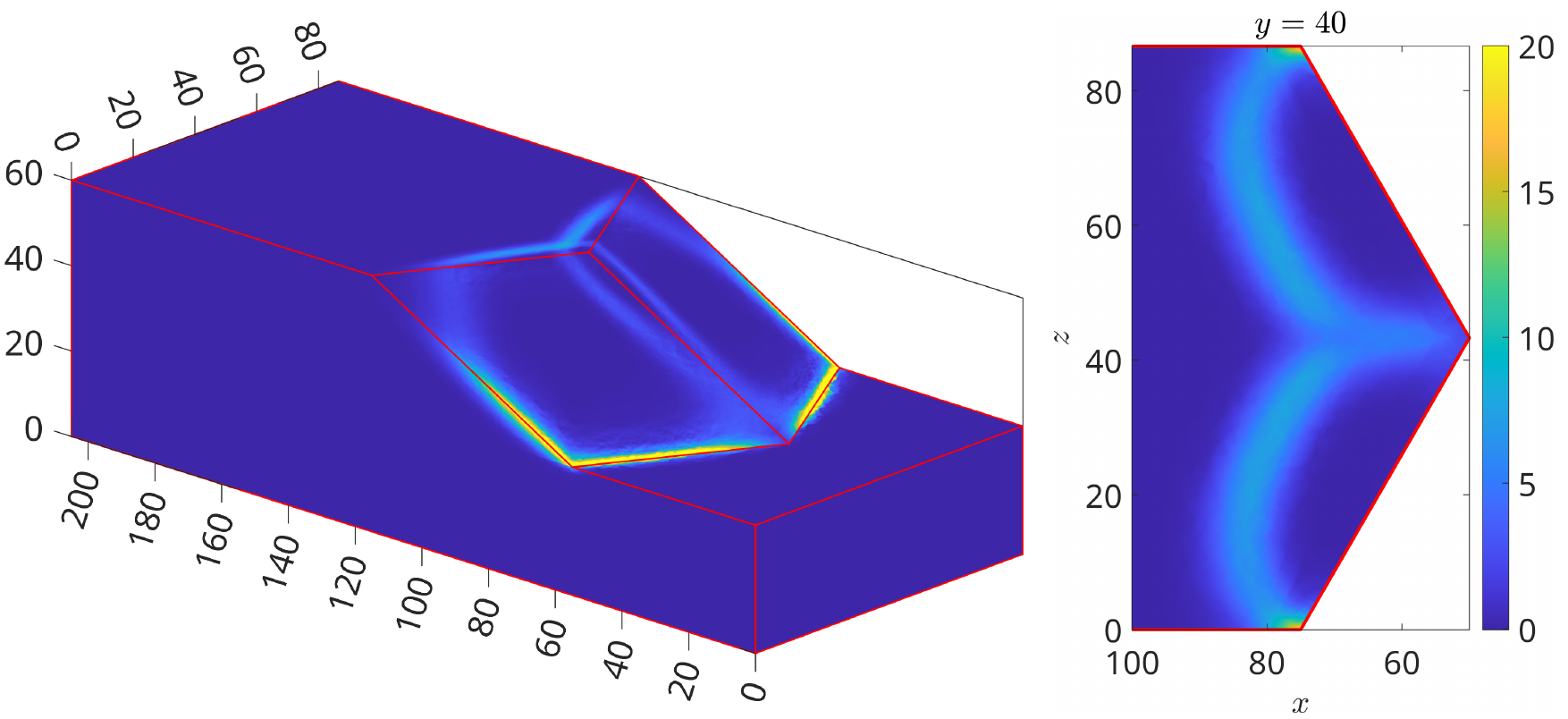}
	\caption{Failure zones for the SSR method computed on a locally refined mesh. Left -- failure on the slope boundary; right -- failure on the horizontal cross section at hight of $40\,$m.}
	\label{fig_failure}
\end{figure}

\section{Conclusion}
\label{sec_conclusion}

In this paper, abstract convex optimization problems in $\mathbb R^n$ inspired by the LL, LA and SSR methods were analyzed and illustrated on numerical and analytical examples. We would highlight the following benefits of the developed abstract framework. 

\begin{enumerate}
    \item To approach the LL, LA and SSR methods known in engineering and geotechnics for mathematicians and other scientists, without the necessity to have knowledge of elasto-plastic models. Especially, the SSR method is practically unknown in mathematical community despite the fact that it can be completed with  mathematical background. 
    \item The algebraic settings of the methods enabled us to easily and transparently introduce advanced continuation  methods. Especially, the suggested indirect continuation method for the SSR method is original to the best of our knowledge. 
    \item We show that the LA problem \eqref{LA_problem} is closely related to solvability analysis in convex optimization, see Theorem \ref{theorem_existence}. The presented analytical examples suggest that this idea could be extended to a broader class of convex functions $\mathcal I$, which are not smooth or do not have a linear growth at infinity. In addition, the LA analysis problem is meaningful even in convex optimization with constraints, see e.g. \cite{SHHC15}.
\end{enumerate}

Finally, it is important to emphasize that there are many other difficulties related to geotechnical stability analysis which are out of the scope of this paper. First, most elasto-plastic models in geotechnics are 'nonassociated' meaning that the investigated optimization framework cannot be done directly and convenient approximations of the models have to be employed. Second, computed FoS can be overestimated and strongly dependent on the mesh density, especially if the simplest linear finite elements are used. Finally, it is also challenging to find convenient iterative solvers for ill-conditioned systems solved in stability analysis.

\medskip\noindent
\textbf{Acknowledgement.} The authors thank Prof. Maya Neytcheva (Uppsala) for valuable comments on this paper.

\bibliographystyle{siamplain}
\bibliography{bibliography}

\end{document}